\newtheorem{theorem}{Theorem}[section]
\newtheorem{lemma}[theorem]{Lemma}
\newtheorem{proposition}[theorem]{Proposition}
\begin{document}

\providecommand{\ann}{\mathop{\rm ann}\nolimits}%
\providecommand{\gld}{\mathop{\rm gl. dim}\nolimits}%
\providecommand{\gord}{\mathop{\rm gor. dim}\nolimits}%
\providecommand{\pd}{\mathop{\rm pd}\nolimits}%
\providecommand{\rk}{\mathop{\rm rk}\nolimits}%
\providecommand{\Fac}{\mathop{\rm Fac}\nolimits}%
\providecommand{\ind}{\mathop{\rm ind}\nolimits}%
\providecommand{\Sub}{\mathop{\rm Sub}\nolimits}%
\providecommand{\coker}{\mathop{\rm coker}\nolimits}%
\providecommand{\cone}{\mathop{\rm cone}\nolimits}%
\def\s{\stackrel}
\def\A{\mathcal{A}}
\def\C{\mathcal{C}}
\def\D{\mathcal{D}}
\def\DA{{D^b(A)}}
\def\K{{K^b(\proj A)}}
\def\H{\mathcal{H}}
\def\T{\mathcal{T}}
\def\P{\mathcal{P}}
\def\X{\mathcal{X}}
\def\Y{\mathcal{Y}}
\def\Z{\mathcal{Z}}
\def\F{\mathcal{F}}
\def\R{\mathcal{R}}
\def\L{\mathcal{L}}
\def\CP{\C(\p)}
\def\op{\text{op}}
\providecommand{\add}{\mathop{\rm add}\nolimits}%
\providecommand{\ann}{\mathop{\rm ann}\nolimits}%
\providecommand{\End}{\mathop{\rm End}\nolimits}%
\providecommand{\Ext}{\mathop{\rm Ext}\nolimits}%
\providecommand{\Hom}{\mathop{\rm Hom}\nolimits}%
\providecommand{\inj}{\mathop{\rm inj}\nolimits}%
\providecommand{\proj}{\mathop{\rm proj}\nolimits}%
\providecommand{\rad}{\mathop{\rm rad}\nolimits}%
\providecommand{\soc}{\mathop{\rm soc}\nolimits}%
\providecommand{\thick}{\mathop{\rm thick}\nolimits}%
\providecommand{\Tr}{\mathop{\rm Tr}\nolimits}%
\renewcommand{\dim}{\mathop{\rm dim}\nolimits}%
\renewcommand{\Im}{\mathop{\rm Im}\nolimits}%
\renewcommand{\mod}{\mathop{\rm mod}\nolimits}%
\renewcommand{\ker}{\mathop{\rm ker}\nolimits}%
\renewcommand{\rad}{\mathop{\rm rad}\nolimits}%
\def \text{\mbox}
\def\ends{\end{enumerate}}
\newcommand{\id}{\operatorname{id}}
\renewcommand{\k}{\mathbf{k}}
\newcommand{\p}{\mathbf{P}}
\newcommand{\q}{\mathbf{Q}}
\newcommand{\rr}{\mathbf{R}}
\newcommand{\x}{\mathbf{X}}
\newcommand{\y}{\mathbf{Y}}
\newcommand{\z}{\mathbf{Z}}
\newcommand{\oo}{\mathbf{O}}
\newcommand{\Ll}{\mathbf{L}}
\newcommand{\m}{\mathbf{M}}
\newcommand{\n}{\mathbf{N}}
\renewcommand{\d}{\mathbf{D}}
\newcommand{\e}{\mathbf{E}}
\renewcommand{\t}{\mathbf{T}}

\title{Endomorphism algebras of 2-term silting complexes}

\author[Buan]{Aslak Bakke Buan}
\address{
Department of Mathematical Sciences
Norwegian University of Science and Technology
7491 Trondheim
NORWAY
}
\email{aslakb@math.ntnu.no}

\author[Zhou]{Yu Zhou}
\address{
Department of Mathematical Sciences
Norwegian University of Science and Technology
7491 Trondheim
NORWAY
}
\email{yu.zhou@math.ntnu.no}

\begin{abstract}
We study possible values of the global dimension of endomorphism algebras of 2-term silting complexes. We show that for any algebra $A$ whose global dimension $\gld A\leq 2$ and any 2-term silting complex $\p$ in the bounded derived category $\DA$ of $A$, the global dimension of $\End_\DA(\p)$ is at most 7. We also show that for each $n>2$, there is an algebra $A$ with $\gld A=n$ such that $\DA$ admits a 2-term silting complex $\p$ with $\gld \End_\DA(\p)$ infinite.
\end{abstract}

\thanks{
This work was supported by FRINAT grant number 231000, from the
Norwegian Research Council. Support by the Institut Mittag-Leffler (Djursholm, Sweden) is gratefully
acknowledged.
}

\maketitle

\section*{Introduction}

Let $A$ be a finite dimensional algebra over a field $k$. Let
$T$ be a (classical) tilting module in the category $\mod A$ of finite
dimensional right $A$-modules; that is the projective dimension $\pd T$
is at most $1$, we have $\Ext_A^1(T,T) = 0$ and
there is an exact sequence $0 \to A \to T_1 \to T_2 \to 0$
with $T_1,T_2$ in $\add T$, the additive closure of $T$.
Let $B = \End_A(T)$.
Then, it is a well-known fact (see for example \cite[III, Section 3.4]{ha} for a more general statement) that $\gld B \leq \gld A + 1$, where $\gld A$ denotes the global dimension of $A$.

In this paper we investigate to which extent this generalizes
to the following setting. We now consider a 2-term silting
complex $\p$ in the bounded homotopy category of finitely generated
projective $A$-modules, $K^b(\proj A)$.
This is just a map between projective $A$-modules, considered as a complex, with
the property that $\Hom_{K^b(\proj A)}(\p,\p[1]) = 0$ where $[1]$ denotes the shift functor, and such that $\p$
generates $K^b(\proj A)$ as a triangulated category. Note that  $K^b(\proj A)$ can be considered to be a full triangulated subcategory
of the derived category $\DA$.

The concept of silting complexes originated from \cite{kv}, and
has more recently been studied by many authors, often motivated by
combinatorial aspects related to mutations, as in \cite{ai}.
Moreover, the case of 2-term silting is of particular interest, see e.g.
\cite{air}, \cite{by} and \cite{ky}.

In the setting of 2-term silting, we have the following theorem:

\begin{theorem}\label{Main1}
Let $B = \End_{\DA}(\p)$, for a 2-term silting
complex $\p$ in $K^b(\proj A)$. Then the following hold.
\begin{itemize}
\item[(a)] If $\gld A =1$, then $\gld B \leq 3$.
\item[(b)] If $\gld A =2$, then $\gld B \leq 7$.
\end{itemize}
Moreover, for each $n > 2$, there is an algebra $A$, with $\gld A = n$, such that $K^b(\proj A)$ admits a 2-term
silting complex $\p$ with $\gld \End_{\DA}(\p) = \infty$.
\end{theorem}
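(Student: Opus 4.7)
The overall plan is a two-stage argument: first reduce the claim for general $n > 2$ to the case $n = 3$ by a direct-product trick, then construct a concrete example in that case. (Throughout, I write $\End(-)$ for the endomorphism ring in the appropriate bounded derived category when context fixes it.)

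For the reduction, assume we have already produced an algebra $A_3$ with $\gld A_3 = 3$ and a 2-term silting complex $\p_3$ in $K^b(\proj A_3)$ satisfying $\gld \End(\p_3) = \infty$. For each $n \geq 3$ fix an algebra $D_n$ with $\gld D_n = n$; a convenient choice is the radical-square-zero Nakayama algebra $k[1 \to 2 \to \cdots \to n+1]/\rad^2$, whose simple at vertex $1$ has projective dimension exactly $n$. Set $A := A_3 \times D_n$; then $\gld A = \max(3, n) = n$, and since $A$ is a product, $K^b(\proj A)$ decomposes as $K^b(\proj A_3) \times K^b(\proj D_n)$. The complex $\p := \p_3 \oplus D_n$, with $D_n$ regarded as the free module concentrated in degree $0$, is 2-term silting in $K^b(\proj A)$, because $\Hom(\p,\p[1]) = 0$ and $\thick(\p) = K^b(\proj A)$ both hold factor-wise. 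Its endomorphism algebra is $\End_{\DA}(\p) \cong \End(\p_3) \times D_n$, whose global dimension equals $\max(\infty, n) = \infty$.

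The substance is therefore the base case $n = 3$, which parts (a) and (b) of the theorem prevent us from avoiding. The target is to build $\p$ so that $\End(\p)$ contains a loop $\ell$ in its Gabriel quiver with $\ell^2 = 0$; the simple at the loop vertex then admits a $2$-periodic minimal projective resolution, forcing $\gld \End(\p) = \infty$. Via the Adachi--Iyama--Reiten correspondence, 2-term silting complexes in $K^b(\proj A)$ are parametrised by support $\tau$-tilting $A$-modules $M = \overline{M} \oplus 0$, and $\End(\p)$ can be read off from $M$ together with the shifted projective summands that account for the vanishing part. A natural search space is small Nakayama algebras of global dimension $3$, for instance $A = k[1 \to 2 \to 3 \to 4]/(\alpha_2\alpha_1, \alpha_3\alpha_2)$; one then hunts among its support $\tau$-tilting modules for one whose associated silting complex admits a radical endomorphism that squares to a null-homotopy.

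The main obstacle is precisely this search. Writing $\p = (P^{-1} \xrightarrow{f} P^0)$, an endomorphism of $\p$ is a pair $(u, v)$ of maps of projectives with $fu = vf$, modulo the relation $(u, v) \sim (u + hf,\; v + fh)$ for some $h\colon P^0 \to P^{-1}$. One has to exhibit such a pair $(u, v)$ that is not a null-homotopy yet whose composition $(u^2, v^2)$ is one. This verification is a finite and elementary homological calculation once the correct example is in hand; the creative difficulty is isolating one such pair, after which the rest of the theorem follows from the product reduction above.
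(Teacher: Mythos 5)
Your proposal addresses only the final clause of Theorem~\ref{Main1} (the ``moreover'' statement for $n>2$); parts (a) and (b) are the real content of the paper and are left untouched, so at most you are commenting on one of three claims. With that caveat, here is an assessment of what you did.

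Your product-algebra reduction is correct and is a legitimate alternative to what the paper does. The paper instead exhibits a uniform family of \emph{connected} algebras $A(n)=kQ_n/I_n$ with $\gld A(n)=n+3$, which is stylistically different but not substantively better; your reduction $A:=A_3\times D_n$ with $\p:=\p_3\oplus D_n$, and the isomorphism $\End_{\DA}(\p)\cong\End(\p_3)\times D_n$, is fine. However, the entire weight of the argument is then shifted onto the base case $n=3$, and there you do not actually produce an example: you describe a search heuristic (``hunt among support $\tau$-tilting modules for a radical endomorphism that squares to a null-homotopy'') and explicitly acknowledge that ``the creative difficulty is isolating one such pair.'' That is a genuine gap, not a deferred routine computation --- without the example, the theorem is unproved.

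Two further concerns about the direction of the search. First, your target is a loop $\ell$ with $\ell^2=0$ in the Gabriel quiver of $\End(\p)$, giving a $2$-periodic resolution of the loop simple. The paper's example has \emph{no} such shortcut: it is an algebra $A$ over the quiver with vertices $1,2,3,4$ and arrows $a\colon2\to3$, $b\colon3\to2$, $c\colon2\to1$, $d\colon2\to4$, $e\colon4\to2$ (relations $ba$, $bd$, $abc$, $de$), and the infinite projective dimension is detected by showing the \emph{third} syzygy of $S_2$ in $\C(\p)$ returns to $S_2$, i.e.\ a $3$-periodic resolution inside the heart --- not via a loop with square zero. Second, your proposed search space, Nakayama algebras such as $k[1\to2\to3\to4]/(\alpha_2\alpha_1,\alpha_3\alpha_2)$, is likely barren: for Nakayama algebras the 2-term silting complexes and their endomorphism rings are quite constrained, and it is not at all evident that any such example can produce an endomorphism algebra of infinite global dimension (let alone one with a loop). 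You would at minimum need to verify this, and more likely you would need to enlarge the search space to non-Nakayama algebras of the sort the paper actually uses. In short: the reduction step buys you nothing until you have a verified base case, and the strategy you sketch for finding one both differs from what works in the paper and is unsupported.
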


Note that the projective presentation of a tilting $A$-module $T$ as defined above, gives rise to a 2-term silting complex $\p_T$ in $K^b(\proj A)$, and that we have an isomorphism of algebras $\End_A(T) \cong \End_{\DA}(\p_T)$.

The situation in part (a) was studied in \cite{bz2}. In this case
$B$ is a called a {\em silted algebra}, and it was proved that
silted algebras are so-called shod algebras \cite{cl}, in particular
this implies that $\gld B \leq 3$, by \cite{hrs}.

The main body of this paper is a proof of (b), an
example that the global dimension of $B$ actually can
be $7$ in this case, and a class of examples that justifies the last statement of Theorem \ref{Main1}.

We also prove that with a stronger assumption on $\p$,
we actually get that $\gld B$ is bounded by $\gld A$.
More precisely, we show the following.

\begin{theorem}\label{Main2}
With the above notation, and assuming in addition that $\pd H^0(\p) \leq 1$,
we have $\gld B \leq 2 (\gld A) +2$.
\end{theorem}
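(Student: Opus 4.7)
My plan is to reduce the problem to the classical theory of tilting modules of projective dimension one. I would begin with the HRS tilt associated with the torsion pair $(\T,\F)=(\Fac H^0(\p),\, H^0(\p)^{\perp})$ in $\mod A$: the resulting tilted heart $\H\subseteq\DA$ is equivalent to $\mod B$ via $\Hom_{\DA}(\p,-)$, so $\gld B=\gld\H$, and it suffices to bound projective dimensions in $\H$. Writing $\p=(P^{-1}\xrightarrow{d}P^0)$ as a minimal silting representative and setting $T:=H^0(\p)$, $P:=H^{-1}(\p)$, the hypothesis $\pd_A T\leq 1$ forces $\Im d$ to be projective; the surjection $P^{-1}\twoheadrightarrow\Im d$ then splits, yielding $\p\cong T\oplus P[1]$ in $\DA$, with $T$ a classical $1$-tilting $A$-module and $P$ a projective $A$-module. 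The AIR bijection with support $\tau$-tilting pairs additionally gives $\Hom_A(P,T)=0$.

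\textbf{Triangular form and estimates.} Combining these vanishings with the automatic $\Hom_{\DA}(P[1],T)=0$ (a consequence of the degree placements alone) produces the lower-triangular description
\[
B\cong\begin{pmatrix}B'&0\\ \Ext^1_A(T,P)&\End_A(P)\end{pmatrix},
\]
with $B':=\End_A(T)$ the tilted algebra. From here I would invoke two classical inputs. Happel's theorem for $1$-tilting gives $\gld B'\leq\gld A+1$. And the derived equivalence $R\Hom_A(T,-):D^b(A)\xrightarrow{\sim}D^b(B')$ sends $P$ to a two-term complex in $D^b(B')$ with cohomologies $\Hom_A(T,P)$ and $\Ext^1_A(T,P)$, which lets me bound $\pd_{B'}\Ext^1_A(T,P)\leq\gld B'\leq\gld A+1$. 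Feeding this estimate, together with a parallel bound on the $\End_A(P)$-side, into a standard bound for the global dimension of a triangular matrix algebra produces $\gld B\leq 2\gld A+2$.

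\textbf{The main obstacle.} The hardest step is the simultaneous control of $\gld\End_A(P)$---the corner algebra $eAe$ for the idempotent $e$ corresponding to $P$, which is not a priori bounded by $\gld A$---together with the two-sided projective dimension of the bimodule $\Ext^1_A(T,P)$. The factor $2$ and the $+2$ in the bound $2\gld A+2$ arise precisely from symmetrically combining two Happel-type estimates on the two sides of the triangular matrix; making this combination work, by exploiting both that $\End_A(P)$ appears as a corner of $B$ and the tilting derived equivalence above, is the crux of the argument.
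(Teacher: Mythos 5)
Your approach is genuinely different from the paper's, but it has several gaps, one of which is a flat-out error and at least one of which you acknowledge but do not close.

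\textbf{The error.} You assert that under the hypothesis $\pd_A H^0(\p)\leq 1$, the module $T=H^0(\p)$ is a \emph{classical $1$-tilting} $A$-module. This is false in general. What follows from the silting condition is that $T$ has $\pd T\leq 1$ and $\Ext^1_A(T,T)=0$, i.e.\ it is a \emph{partial} tilting module (the paper says exactly this at the start of its Section 3). When $P=H^{-1}(\p)\neq 0$, the module $T$ has strictly fewer non-isomorphic indecomposable summands than $A$ has simples, so it cannot be a tilting module of $A$. Consequently Happel's bound $\gld\End_A(T)\leq\gld A+1$ is not available; there is no general bound of this type for endomorphism rings of partial tilting modules. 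This breaks the cornerstone of your triangular-matrix estimate.

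\textbf{The acknowledged gap.} You correctly flag that $\End_A(P)\cong eAe$ for an idempotent $e$, and that corner algebras have no a priori bound on global dimension in terms of $\gld A$. You describe "making this combination work" as the crux, but you give no mechanism for it. This is precisely the kind of obstruction the paper's approach is designed to circumvent: rather than decomposing $B$ as a ring and estimating corner algebras, the paper stays inside $\DA$ and bounds projective dimensions directly in the heart $\CP$, which is equivalent to $\mod B$. The key moves there are the inclusion $\CP\subset\P\ast\P[1]\ast\cdots\ast\P[d]\ast\add H^0(\p)[d+1]$ (Lemma~\ref{lem:CPP1}), the placement $tQ\in\add H^0(\p)\ast\cdots\ast\add H^0(\p)[d-1]$ (Lemma~\ref{lem:XQ1}), and an octahedral-axiom induction that replaces right $\P$-approximations by approximations from the projective generator $\widetilde{\p}$ of $\CP$, feeding into Lemma~\ref{lem:pdn+1}. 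None of this passes through $\End_A(P)$ or through a triangular-matrix presentation of $B$. As it stands, your proposal is a plausible-looking strategy but not a proof: the tilting claim is wrong, and the corner-algebra bound is missing.
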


In the first section, we recall some notation and background concerning 2-term silting complexes and their endomorphism algebras.
In the second section, we prove some preliminary general results. Then, in Section 3 and 4, we prove respectively Theorem \ref{Main2}
and Theorem \ref{Main1}, while in the last section, we give some examples.

\section{Background and notation}
Let $A$ be a finite dimensional algebra with $\gld A=d$. Then $K^b(\proj A)=\DA:=\D$. Let $\p$ be a 2-term silting complex in $\D$ and let $B=\End_\D(\p)$. We recall some classical notation (see e.g. \cite{ass}) and some results from \cite{bz1}, which will be used freely in the remaining of the paper.

Recall that a pair of subcategories  $(\X,\Y)$ of $\mod A$,
is called a {\em torsion pair}, if the following hold:
\begin{itemize}
\item[-] $\Hom_A(\X, Y)= 0$ if and only if $Y$ is in $\Y$, and
\item[-] $\Hom_A(X, \Y)= 0$ if and only if $X$ is in $\X$.
\end{itemize}

For a given torsion pair $(\X,\Y)$ and an object $M$ in $\mod A$,
there is a (unique) exact sequence
$$0 \to tM \to M \to M/tM \to 0$$
with $tM$ in $\X$ and $M/tM$ in $\Y$. This is called the {\em canonical sequence} of $M$. Furthermore, for an $A$-module $X$
we let $\add X$ denote the additive closure of $X$ in
$\mod A$, and we
let $\Fac X$ denote the full subcategory of all quotients of
modules in $\add X$. The first notion is also used for a complex $X$ in $\D$.

For a 2-term silting complex $\p$,
consider the full subcategories of $\mod A$ given by
\begin{itemize}
\item[-]$\T(\p) =\{X \in \mod A \mid \Hom_{\D}(\p,X[1]) = 0$,  and
\item[-]$\F(\p) =\{Y \in \mod A \mid \Hom_{\D}(\p,Y) = 0$.
\end{itemize}
Furthermore, let $B = \End_{\D}(\p)$.
The following summarizes results from \cite{bz1} which will
be essential later in this paper.

\begin{proposition}\label{prop:summa}
Let $\p$ be a 2-term silting complex in $K^b(\proj A)$. Then the following hold.
\begin{itemize}
\item[(a)] The pair $(\T(\p),\F(\p))$ is a torsion pair in $\mod A$.
\item[(b)] $\T(\p) = \Fac H^0(\p)$.
\item[(c)] The category $\C(\p) = \{\x \in \D \mid  \Hom(\p,\x[i]) = 0 \text{  for  } i \neq 0 \}$ is an abelian category with
short exact sequences coinciding with the triangles in $\D$ whose vertices are in $\C(\p)$.
\item[(d)] Let $\x$ be in $\D$. Then we have that $\x$ is in $\C(\p)$ if and only if $H^0(\x)$ is in $\T(\p)$,
$H^{-1}(\x)$ is in $\F(\p)$ and $H^i(\x) = 0$ for $i \neq -1, 0$.
\item[(e)] $\Hom_{\D}(\p, - )\colon \C(\p) \to \mod B$ is an equivalence of (abelian) categories.
\end{itemize}
\end{proposition}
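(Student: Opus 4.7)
The plan is to package parts (c), (d), (e) as consequences of a single $t$-structure on $\D$ whose heart is $\C(\p)$, while treating (a) and (b) directly via the cohomological functor $\Hom_{\D}(\p,-)$. Define
$$\D^{\leq 0}=\{X\in\D\mid \Hom_{\D}(\p,X[i])=0 \text{ for all }i>0\},\qquad \D^{\geq 0}=\{X\in\D\mid \Hom_{\D}(\p,X[i])=0 \text{ for all }i<0\}.$$
First one verifies that $(\D^{\leq 0},\D^{\geq 0})$ is a bounded $t$-structure on $\D$; its heart is then exactly $\C(\p)$, which gives (c) by the general fact that the heart of any $t$-structure is abelian with short exact sequences coinciding with triangles of objects in the heart. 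Statement (e) follows from the standard silting recipe: $\Hom_{\D}(\p,-)$ is exact on $\C(\p)$ (the vanishing $\Hom_{\D}(\p,X[i])=0$ for $i\neq 0$ is built into the definition of $\C(\p)$), it sends $\p$ to $B$, and $\p$ is a projective generator of $\C(\p)$, so by a Morita-type argument the functor is an equivalence $\C(\p)\simeq\mod B$.

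For the torsion-pair parts, write $\p$ as $P^{-1}\xrightarrow{d}P^0$ concentrated in degrees $-1,0$; a direct computation yields $\Hom_{\D}(\p,M)=\Hom_A(H^0(\p),M)$ and $\Hom_{\D}(\p,M[i])=0$ for any $M\in\mod A$ and $i\notin\{0,1\}$. So $\F(\p)=\{M\mid\Hom_A(H^0(\p),M)=0\}$ is precisely the torsion-free class of the classical torsion pair generated by $H^0(\p)$, whose torsion class is $\Fac H^0(\p)$. Applying $\Hom_{\D}(\p,-)$ to a short exact sequence $0\to X\to Y\to Z\to 0$ in $\mod A$ and using the above vanishing, the long exact sequence collapses to give closure of $\T(\p)$ under quotients and extensions and of $\F(\p)$ under submodules and extensions. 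To see $\T(\p)=\Fac H^0(\p)$: the standard triangle $H^{-1}(\p)[1]\to\p\to H^0(\p)\to H^{-1}(\p)[2]$, combined with the silting hypothesis $\Hom_{\D}(\p,\p[1])=0$ and the vanishing above, yields $\Hom_{\D}(\p,H^0(\p)[1])=0$, so $H^0(\p)\in\T(\p)$, and the closure properties of $\T(\p)$ then give $\Fac H^0(\p)\subseteq\T(\p)$. Conversely, given $M\in\T(\p)$, its canonical decomposition in $(\Fac H^0(\p),\F(\p))$ exhibits $M/tM\in\T(\p)\cap\F(\p)$; any $N$ in this intersection satisfies $\Hom_{\D}(\p,N[i])=0$ for \emph{all} $i$, which forces $N=0$ because $\p$ generates $\D$ as a triangulated category.

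Finally, (d) follows by combining (a), (b), and the $t$-structure: for $X\in\C(\p)$, the vanishing $\Hom_{\D}(\p,X[i])=0$ for $i\neq 0$ together with the 2-term shape of $\p$ forces $H^i(X)=0$ for $i\notin\{-1,0\}$, while truncating $X$ with respect to the standard $t$-structure and applying $\Hom_{\D}(\p,-)$ to the resulting triangle places $H^0(X)\in\T(\p)$ and $H^{-1}(X)\in\F(\p)$; the converse is obtained by reversing this computation. I expect the main obstacle to be verifying the truncation axiom of the $t$-structure, i.e.\ producing, for every $X\in\D$, a functorial triangle with terms in $\D^{\leq 0}$ and $\D^{\geq 0}[-1]$. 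This requires an explicit right $\add\p$-approximation of $X$, and it is here that both the 2-term hypothesis (keeping the approximation a single map of projectives) and the silting condition $\Hom_{\D}(\p,\p[1])=0$ (to kill the relevant obstruction) become essential.
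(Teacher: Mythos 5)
The paper itself gives no proof of Proposition~\ref{prop:summa}: it is stated as a summary of results from \cite{bz1} and used as a black box. So there is no in-text argument to compare against; I will assess your sketch on its own terms.

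The overall architecture (a $t$-structure $(\D^{\leq 0}(\p),\D^{\geq 0}(\p))$ with heart $\C(\p)$ for (c)--(e), and the cohomological functor $\Hom_\D(\p,-)$ applied to short exact sequences of modules for (a)--(b)) is the right one and matches the cited literature. However, there are genuine gaps.

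First, the converse inclusion $\T(\p)\subseteq\Fac H^0(\p)$ in (b) is not proved. You invoke "the canonical decomposition of $M$ in $(\Fac H^0(\p),\F(\p))$", but at that stage you have not shown that $(\Fac H^0(\p),\F(\p))$ is a torsion pair, and for a general module $T$ the trace sequence $0\to\tau_T M\to M\to M/\tau_T M\to 0$ does \emph{not} automatically have $M/\tau_T M\in T^\perp$. This holds precisely when $\Fac T$ is closed under extensions, which is a real place where the silting hypothesis enters (e.g.\ via the embedding $\Ext^1_A(H^0(\p),X)\hookrightarrow\Hom_\D(\p,X[1])$, which vanishes for $X\in\T(\p)$). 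To see the issue is not formal: for $A=\k[x]/(x^2)$ and $T=\k$, the trace of $T$ in $A$ is $(x)$ and $A/(x)\cong\k\notin T^\perp$. Either establish extension-closure of $\Fac H^0(\p)$ first, or use the silting generation triangle $A\to\p'\to\p''\to A[1]$ with $\p',\p''\in\add\p$ to lift a surjection $A^n\twoheadrightarrow M$ to a surjection $H^0(\p')^n\twoheadrightarrow M$.

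Second, the claim that "$\p$ is a projective generator of $\C(\p)$" is false unless $\p$ is a tilting complex: in general $\p\notin\C(\p)$ precisely because $H^{-1}(\p)$ need not be torsion-free. The projective generator of $\C(\p)$ is the modified complex $\widetilde\p$ obtained by killing the torsion part of $H^{-1}(\p)$ (this is exactly what the paper sets up in Section~\ref{sec:pre}, Lemma~\ref{lem:proj1}); one uses the isomorphism $\Hom_\D(\p,\widetilde\p)\cong\Hom_\D(\p,\p)=B$ to carry out the Morita-type argument. As written, your step (e) would fail for any non-tilting $\p$.

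Third, the truncation axiom for the $t$-structure, which you flag as "the main obstacle", is indeed the crux and is left unproved; it is exactly where the silting hypothesis, approximations by $\add\p$, and \cite[Prop.\ 2.23]{ai} come in, and (c) and (e) rest entirely on it. Deferring it is fine for a sketch, but it should be acknowledged that this, together with the two points above, means the proposal is a plan rather than a proof.
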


For full subcategories $\X$ and $\Y$ of $\D$, we let $\X \ast \Y$ denote the full subcategory of $\D$ with objects
$Z$ appearing in a triangle
$$X \to Z \to Y \to X[1]$$ with $X$ in $\X$ and $Y$ in $\Y$.
It follows from the octahedral axiom that we have $(\X \ast \Y) \ast \Z = \X \ast (\Y \ast \Z)$, for
three full subcategories $\X, \Y$ and $\Z$. The subcategory $\X$ is called {\em extension closed} if
$\X \ast \X = \X$.
We will need the following fact, which follows from \cite[Propositions 2.1 and 2.4]{iy}.

\begin{lemma}\label{lem:iy}
Let $\X_i$ be subcategories of $\D$, with $\Hom_\D(\X_i, \X_j) = 0 =Hom_\D(\X_i, \X_j[1])$ for $i<j$.
Then $\X_1 \ast \X_2 \ast \cdots \ast \X_n$ is closed under extensions and direct summands.
\end{lemma}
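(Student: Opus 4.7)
The plan is to proceed by induction on $n$, with the base case $n=2$ being the crux. I will read the statement under the implicit Iyama--Yoshino assumption that each $\X_i$ is itself closed under extensions and direct summands, which is the standard setup in which the cited results are proved. For the inductive step, set $\Y := \X_2 \ast \cdots \ast \X_n$; by induction $\Y$ is extension- and summand-closed. Writing an arbitrary $Y \in \Y$ as an iterated extension of objects in $\X_2, \dots, \X_n$ and repeatedly chasing the long exact $\Hom$-sequences (using the vanishing hypotheses for $j > 1$), one obtains $\Hom_\D(\X_1, \Y) = 0 = \Hom_\D(\X_1, \Y[1])$. This reduces the problem to the two-subcategory case applied to the pair $(\X_1, \Y)$.

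For the base case, suppose $Z' \to Z \to Z'' \to Z'[1]$ is a triangle with $Z', Z'' \in \X_1 \ast \X_2$, and fix decomposition triangles
\[
X_1' \to Z' \to X_2' \to X_1'[1] \quad\text{and}\quad X_1'' \to Z'' \to X_2'' \to X_1''[1],
\]
with $X_i', X_i'' \in \X_i$. The key step is that the composite $X_1'' \to Z'' \to Z'[1] \to X_2'[1]$ lies in $\Hom_\D(X_1'', X_2'[1]) = 0$, so the map $X_1'' \to Z'' \to Z'[1]$ factors through $X_1'[1] \to Z'[1]$ via some $X_1'' \to X_1'[1]$. By the axiom (TR3) this extends to a morphism of triangles from the decomposition of $Z''$ to the shift of the decomposition of $Z'$. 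Iterated octahedra then produce a decomposition triangle $X_1 \to Z \to X_2 \to X_1[1]$ in which $X_1$ is an extension of $X_1''$ by $X_1'$ and $X_2$ is an extension of $X_2''$ by $X_2'$; by extension-closure of $\X_1$ and $\X_2$ we obtain $Z \in \X_1 \ast \X_2$.

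For summand-closedness, assume $Z \oplus Z' \in \X_1 \ast \X_2$ with decomposition triangle $X \to Z \oplus Z' \to Y \to X[1]$. The projection idempotent $e$ of $Z \oplus Z'$ onto $Z$ lifts to endomorphisms $e_X$ of $X$ and $e_Y$ of $Y$, since the obstructions to lifting lie in $\Hom_\D(X, Y) = 0$. Idempotent-completeness of $\D$ (and hence of each $\X_i$) then splits $X = X_Z \oplus X_{Z'}$ and $Y = Y_Z \oplus Y_{Z'}$, and the resulting split triangles yield decomposition triangles $X_Z \to Z \to Y_Z \to X_Z[1]$ and $X_{Z'} \to Z' \to Y_{Z'} \to X_{Z'}[1]$, witnessing $Z, Z' \in \X_1 \ast \X_2$.

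The main obstacle is the octahedral bookkeeping in the base case: one must verify that the lifted morphism $X_1'' \to X_1'[1]$ really assembles into a coherent morphism of triangles, and that the resulting octahedra produce a genuine decomposition of $Z$ whose pieces lie in $\X_1$ and $\X_2$ rather than in strict $\ast$-iterates. A secondary technical point is the idempotent-lifting step for summand-closedness, which tacitly uses Krull--Schmidt / idempotent-completeness of the ambient triangulated category.
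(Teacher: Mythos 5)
The paper does not actually prove this lemma; it delegates to Iyama--Yoshino \cite[Propositions 2.1 and 2.4]{iy}, so yours is a from-scratch reconstruction. Your decision to read in the additional hypothesis that each $\X_i$ is closed under extensions and direct summands is well taken: without it the statement is false (take $\X_2 = 0$ and $\X_1$ not extension closed, so that $\X_1 \ast \X_2 = \X_1$), and in all of the paper's applications the $\X_i$ are additive subcategories of the form $\add M[i]$ with $M$ rigid, so the hypothesis is automatic. Your extension-closure argument and the induction are correct; unpacked, the point is that $\Hom_\D(\X_1,\X_2[1])=0$ forces every triangle witnessing an object of $\X_2 \ast \X_1$ to split, so $\X_2\ast\X_1\subset\X_1\ast\X_2$, whence $(\X_1\ast\X_2)\ast(\X_1\ast\X_2)\subset\X_1\ast\X_1\ast\X_2\ast\X_2=\X_1\ast\X_2$ by extension-closure of the pieces.

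The summand-closure argument, however, has a genuine gap. From $\Hom_\D(X,Y)=0$ you lift the idempotent $e$ of $Z\oplus Z'$ to endomorphisms $e_X$ of $X$ and $e_Y$ of $Y$ forming a morphism of triangles, and then invoke idempotent-completeness to split $X$ and $Y$. But the lifted maps $e_X$ and $e_Y$ are in general not idempotent: the lift of $e$ through $a\colon X\to Z\oplus Z'$ is unique only up to morphisms $X\to X$ factoring through $Y[-1]$, and $\Hom_\D(X,Y[-1])$ is not assumed to vanish, so the relation $e^2=e$ does not propagate to $e_X^2=e_X$. This is precisely the delicate point that Iyama--Yoshino handle: one must exploit that $\End_\D(X)$ is a finite-dimensional (hence semiperfect) algebra and replace $e_X$ by a genuine idempotent built from its powers, still lying over $e$, before idempotent splitting applies. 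As written, the step from ``lifts to $e_X$, $e_Y$'' to ``splits $X=X_Z\oplus X_{Z'}$'' is not justified, even granting Krull--Schmidt for $\D$.
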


\section{Preliminaries}\label{sec:pre}

Now, fix a 2-term silting complex $\p$ in $K^b(\proj A)$, and let $\P = \add \p$.
In this section we include some general observations on projective
objects and projective dimensions in $\CP$.

For each $\p_0$ in $\P$, given by $P_0^{-1} \xrightarrow{p_0} P_0^0$, consider the canonical exact sequence of $H^{-1}(\p_0)$ relative to the torsion pair $(\T(\p),\F(\p))$:
\[0\rightarrow tH^{-1}(\p_0)\rightarrow H^{-1}(\p_0)\rightarrow H^{-1}(\p_0)/tH^{-1}(\p_0)\rightarrow 0.\]
So $tH^{-1}(\p_0)$ is a submodule of $P_0^{-1}$ and we denote by $\pi \colon P_0^{-1}\rightarrow P_0^{-1}/tH^{-1}(\p_0)$ the canonical epimorphism.
Let $\widetilde{\p}_0$ be the complex $P_0^{-1}/tH^{-1}(\p_0)\s{\widetilde{p}_0}\rightarrow P_0^0$, where $\widetilde{p}_0$ is the unique homomorphism such that the diagram
\[
\xymatrix{
                & P_0^{-1}/tH^{-1}(\p_0) \ar[dr]^{\widetilde{p}_0}             \\
 P_0^{-1} \ar[ur]^{\pi} \ar[rr]^{p_0} & &     P_0^0        }
\]
commutes.

Let $\P_C =\P \cap \CP$.

\begin{lemma}\label{lem:pc}
Let $\p_0$ be in $\P$. Then $\p_0$ is in $\P_C$
if and only if $\p_0 \cong \widetilde{\p}_0$.
\end{lemma}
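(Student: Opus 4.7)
The plan is to translate the condition $\p_0\in \CP$ into a purely module-theoretic statement via the cohomological description in Proposition \ref{prop:summa}(d), and then read off both directions from the construction of $\widetilde{\p}_0$. For any $\p_0\in \P$, the vanishings $H^i(\p_0)=0$ for $i\neq -1,0$ hold automatically because $\p_0$ is a 2-term complex, and $H^0(\p_0)\in \add H^0(\p)\subseteq \Fac H^0(\p)=\T(\p)$ by Proposition \ref{prop:summa}(b). So by Proposition \ref{prop:summa}(d), the membership $\p_0\in \P_C$ reduces to the single condition $H^{-1}(\p_0)\in \F(\p)$, which by definition of the torsion pair is in turn equivalent to $tH^{-1}(\p_0)=0$.

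The forward direction is then immediate: if $\p_0\in\P_C$, then $tH^{-1}(\p_0)=0$, so the quotient $P_0^{-1}/tH^{-1}(\p_0)$ equals $P_0^{-1}$ and $\widetilde{p}_0=p_0$; hence $\widetilde{\p}_0=\p_0$.

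For the backward direction I would first record the cohomology of $\widetilde{\p}_0$. Using $\widetilde{p}_0\circ \pi=p_0$ together with the inclusion $tH^{-1}(\p_0)\subseteq H^{-1}(\p_0)=\ker p_0$, one computes
\[
H^0(\widetilde{\p}_0)=\coker \widetilde{p}_0\cong \coker p_0=H^0(\p_0),\qquad H^{-1}(\widetilde{\p}_0)=\ker \widetilde{p}_0=\pi(\ker p_0)=H^{-1}(\p_0)/tH^{-1}(\p_0).
\]
Thus if $\p_0\cong \widetilde{\p}_0$ in $\D$, then in particular $H^{-1}(\p_0)\cong H^{-1}(\p_0)/tH^{-1}(\p_0)$ as $A$-modules; since everything is finite dimensional, comparing $k$-dimensions forces $tH^{-1}(\p_0)=0$, so $\p_0\in\P_C$ by the first paragraph.

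The only subtle point---and it is mild---is that the hypothesis $\p_0\cong \widetilde{\p}_0$ in the backward direction gives an abstract isomorphism in $\D$, not necessarily the canonical chain map $(\pi,\id_{P_0^0})\colon \p_0\to \widetilde{\p}_0$. The argument has to extract the conclusion from an invariant of the isomorphism class, and cohomology combined with finite-dimensionality of $A$-modules is precisely what makes this routine.
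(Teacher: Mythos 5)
Your proof is correct and follows essentially the same reduction as the paper: both arguments funnel through the chain $\p_0\in\CP \Leftrightarrow H^{-1}(\p_0)\in\F(\p) \Leftrightarrow tH^{-1}(\p_0)=0 \Leftrightarrow \p_0\cong\widetilde{\p}_0$. The difference is in how the first link is established and in the care taken with the last. For the first link, the paper argues directly that $\Hom_\D(\p,H^{-1}(\p_0))=0$ iff $\Hom_\D(\p,\p_0[-1])=0$, and that this is the only obstruction to $\p_0\in\CP$; you instead invoke Proposition~\ref{prop:summa}(d) and observe that the remaining hypotheses of (d) hold automatically for $\p_0\in\P$. These are equivalent, but citing (d) is cleaner and avoids the paper's ``it is straightforward to check.'' For the last link, the paper simply asserts ``by definition that $\p_0\cong\widetilde{\p}_0$ if and only if $tH^{-1}(\p_0)=0$,'' which implicitly reads the isomorphism as the canonical chain map $(\pi,\id)$ being invertible; you correctly flag that the hypothesis only gives an abstract isomorphism in $\D$, and you close the gap by comparing cohomology and $k$-dimensions. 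This extra care is a genuine (if mild) improvement over the paper's wording, and your cohomology computation $H^{-1}(\widetilde{\p}_0)\cong H^{-1}(\p_0)/tH^{-1}(\p_0)$ is exactly right.
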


\begin{proof}
We have by definition that $\p_0 \cong \widetilde{\p}_0$
if and only if $tH^{-1}(\p_0) = 0$ if and only if
$H^{-1}(\p_0)$ is in $\F(\p)$ if and only if
$\Hom(\p, H^{-1}(\p_0)) = 0$. It is straightforward to
check that $\Hom(\p, H^{-1}(\p_0)) = 0$ if and only if
$\Hom(\p, \p_0[-1]) = 0$. Moreover, we have
that $\Hom(\p, \p_0[-1]) = 0$ if and only if $\p_0$ is in $\CP$,
and the statement follows from this.
\end{proof}

\begin{lemma}\label{lem:proj1}
With notation as above, the following hold.
\begin{itemize}
\item[(a)] There is a triangle in $\D$:
$$tH^{-1}(\p)[1]\rightarrow \p \rightarrow \widetilde{\p} \rightarrow tH^{-1}(\p)[2].$$

\item[(b)] The object $\widetilde{\p}$ is a projective generator
for $\CP$.
\end{itemize}
\end{lemma}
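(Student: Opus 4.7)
The plan for (a) is to realize the triangle as coming from an honest short exact sequence of chain complexes. I would define a chain map $\alpha\colon\p\to\widetilde{\p}$ that is the canonical projection $\pi$ in degree $-1$ and the identity in degree $0$; compatibility with the differentials is exactly the defining relation $\widetilde{p}\circ\pi = p$. Its kernel as a chain map is $tH^{-1}(\p)$ concentrated in degree $-1$, namely $tH^{-1}(\p)[1]$, and its cokernel is zero. The resulting short exact sequence of complexes $0\to tH^{-1}(\p)[1]\to\p\to\widetilde{\p}\to 0$ yields the desired triangle in $\D$.

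For (b), my plan is to exploit the equivalence $\Hom_\D(\p,-)\colon\CP\to\mod B$ of Proposition \ref{prop:summa}(e). First, $\widetilde{\p}\in\CP$, since $tH^{-1}(\widetilde{\p}) = 0$ by construction, so Lemma \ref{lem:pc} applies. Because the regular module $B$ is a projective generator of $\mod B$, and equivalences preserve projective generators, it suffices to identify $\Hom_\D(\p,\widetilde{\p})$ with $B$ as right $B$-modules. To do this, I would apply $\Hom_\D(\p,-)$ to the triangle from (a) and verify that the flanking terms vanish: $\Hom_\D(\p,tH^{-1}(\p)[1])=0$ because $tH^{-1}(\p)\in\T(\p)$ by the definition of the torsion pair; $\Hom_\D(\p,tH^{-1}(\p)[2])=0$ because $\p$ is a 2-term complex of projectives; and $\Hom_\D(\p,\p[1])=0$ by the silting condition. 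This forces $\Hom_\D(\p,\widetilde{\p})\cong \Hom_\D(\p,\p)=B$, as required.

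There is no serious obstacle here. The step warranting the most care is the vanishing $\Hom_\D(\p,tH^{-1}(\p)[2])=0$, which is precisely where the 2-term hypothesis on $\p$ enters: for any module $M$, $\Hom_\D(\p,M[i])$ is computed by a two-term complex of $\Hom$-spaces of projectives and therefore vanishes for $i\geq 2$. Everything else is a formal consequence of the triangle in (a) together with the torsion-theoretic characterizations recalled in Section 1.
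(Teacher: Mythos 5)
Your proof is essentially the same as the paper's, with two welcome additions of explicitness: you realize the triangle in (a) as coming from the evident degree-wise short exact sequence of complexes, and in (b) you spell out exactly why the flanking $\Hom$-groups vanish (the paper states the isomorphism without elaboration). Both of these additions are correct, and the degree argument for $\Hom_\D(\p,tH^{-1}(\p)[2])=0$ is exactly right.

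The one place you should tighten up is the justification that $\widetilde{\p}\in\CP$. You cite Lemma~\ref{lem:pc}, but that lemma has the hypothesis $\p_0\in\P=\add\p$, and $\widetilde{\p}$ is generally \emph{not} in $\P$ --- it is not even a complex of projectives, since $P^{-1}/tH^{-1}(\p)$ need not be projective. The observation $tH^{-1}(\widetilde{\p})=0$ only gives you $H^{-1}(\widetilde{\p})\in\F(\p)$; to apply Proposition~\ref{prop:summa}(d) (which is the right tool here, and the one the paper uses) you must also note that $H^0(\widetilde{\p})=H^0(\p)\in\Fac H^0(\p)=\T(\p)$ and that $H^i(\widetilde{\p})=0$ for $i\neq -1,0$. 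That said, these are trivial to check, so the gap is easily repaired and does not affect the overall correctness of your argument.
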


\begin{proof}
The triangle in (a) exists by the construction of $\widetilde{\p}$.

Note that $H^0(\widetilde{\p})=H^0(\p)$ is in $\T(\p)$ and $H^{-1}(\widetilde{\p})=H^{-1}(\p)/tH^{-1}(\p)$ is in $\F(\p)$.
Then by Proposition~\ref{prop:summa} (d), we have $\widetilde{\p}\in\CP$.
Applying the functor $\Hom_\D(\p,-)$ to this triangle yields an isomorphism
\[\Hom_\D(\p,\p)\cong\Hom_\D(\p,\widetilde{\p})\]
as $B$-modules. Now (b) follows from Proposition \ref{prop:summa} (e).

\end{proof}

For any integer $i$, we let $\mathcal{\D}^{\leq i}(\p)
= \{\x \in \D \mid \Hom_{\D}(\p,\x[j]) =0 \text{  for  } j>i   \}$,
and we let $\mathcal{\D}^{\geq i}(\p) = \{\x \in \D \mid \Hom_{\D}(\p,\x[j]) =0 \text{  for  } j<i   \}$.

\begin{lemma}\label{lem:CPP}
With notation as above, we have:
$\CP\subset\P\ast\P[1]\ast\cdots\ast\P[d+1]$.
\end{lemma}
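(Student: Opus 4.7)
The plan is to construct a Postnikov-style filtration of $\x\in\CP$ whose successive layers lie in $\P, \P[1], \ldots, \P[d+1]$, using $\gld A=d$ to bound the length. The key input will be that the subcategory $\P\ast\P[1]\ast\cdots\ast\P[d+1]$ is extension-closed by Lemma~\ref{lem:iy}, once I verify its hypotheses with $\X_i=\P[i-1]$: for $i<j$ we need $\Hom_\D(\P[i-1],\P[j-1])=0=\Hom_\D(\P[i-1],\P[j-1][1])$, i.e.\ $\Hom_\D(\p,\p[j-i])=0=\Hom_\D(\p,\p[j-i+1])$, which both follow from the silting vanishing $\Hom_\D(\p,\p[k])=0$ for $k\geq 1$.

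For the construction itself, I would begin by using $H^0(\x)\in\T(\p)=\Fac H^0(\p)$ from Proposition~\ref{prop:summa}(b) to select $\p_0\in\P$ together with an epimorphism $H^0(\p_0)\twoheadrightarrow H^0(\x)$, then lift it to a morphism $f_0\colon\p_0\to\x$ in $\D$ using the projectivity of the terms of $\p_0$ together with a projective representative of $\x$ in $\K$. Letting $\y_1$ be the cone of $f_0$, the cohomology long exact sequence associated with the triangle $\p_0\to\x\to\y_1\to\p_0[1]$ gives $H^0(\y_1)=0$, so $\y_1$ has cohomology concentrated in degrees $-2,-1$, while the triangle itself places $\x\in\p_0\ast\y_1\subset\P\ast\y_1$.

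Next I would iterate: at step $i$, having produced $\y_i$ with cohomology supported in $\{-(i+1),-i\}$, the object $\y_i[-i]$ has cohomology in $\{-1,0\}$ and one applies an analogue of the first step to produce a morphism $\p_i[i]\to\y_i$ (with $\p_i\in\P$) whose cone $\y_{i+1}$ has cohomology supported one degree further down. The module-theoretic obstructions entering at each stage are kernels and cokernels of maps between $A$-modules, each of projective dimension at most $d$, so after at most $d$ further iterations these projective resolutions are exhausted and $\y_{d+2}=0$. Stringing the triangles together and invoking the extension-closedness of $\P\ast\P[1]\ast\cdots\ast\P[d+1]$ then yields the desired inclusion $\x\in\P\ast\P[1]\ast\cdots\ast\P[d+1]$.

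The main obstacle is that the intermediate $\y_i$ typically lie outside $\CP$, and the cohomology modules appearing along the way may fall outside $\T(\p)\cup\F(\p)$, so the existence of a suitable morphism $\p_i[i]\to\y_i$ at each stage is not automatic from Proposition~\ref{prop:summa}(b) alone. The control comes from observing that each such cohomology module is a subquotient of terms appearing in a projective resolution over $A$ of one of $H^0(\x)$ or $H^{-1}(\x)$, combined with the fact that $\add(P^{-1}\oplus P^0)=\proj A$ (since $\p$ is silting) so that every such projective can be reached by an object of $\P$ shifted appropriately. The bound $\gld A=d$ is precisely what ensures the peeling terminates after $d+2$ layers.
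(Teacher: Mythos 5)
Your proposal takes a genuinely different route from the paper, but it has a real gap. The paper's proof is indirect: it first invokes \cite[Proposition 2.23]{ai} to get $\CP\subset\D^{\leq 0}(\p)\subset\P\ast\P[1]\ast\cdots\ast\P[l]$ for \emph{some} finite $l$ (with no a priori control on $l$), and then shrinks $l$ down to $d+1$ by a separate argument: any $\m\in\CP$ has cohomology concentrated in degrees $-1,0$, hence a projective representative in degrees $[-d-1,0]$, hence $\Hom_\D(\m,\p[i])=0$ for $i\geq d+2$; combined with the closure under direct summands supplied by Lemma~\ref{lem:iy}, this lets one peel off the top layers $\P[l],\ldots,\P[d+2]$ one at a time (the map $\m\to\p_l[l]$ in the top triangle is zero, so $\m$ is a summand of the lower part). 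You instead try to build the filtration of length $d+2$ directly by a Postnikov-tower construction, which would be more self-contained if it worked.

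The gap is exactly where you flagged it, and your proposed ``control'' does not close it. The first step is fine: $H^0(\x)\in\T(\p)=\Fac H^0(\p)$ gives you an epimorphism $H^0(\p_0)\twoheadrightarrow H^0(\x)$, which lifts to $\p_0\to\x$ using projectivity. But the long exact sequence for the cone $\y_1$ gives $H^{-1}(\y_1)$ as an extension of $\ker(H^0(\p_0)\to H^0(\x))$ by $\coker(H^{-1}(\p_0)\to H^{-1}(\x))$, and neither of these need lie in $\T(\p)$, so you cannot rerun the same argument to produce $\p_1\in\P$ with $H^0(\p_1)$ surjecting onto $H^0(\y_1[-1])$. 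The assertion that $\add(P^{-1}\oplus P^0)=\proj A$ gives you control is not a substitute: knowing every indecomposable projective appears as a summand of some component of $\p$ does \emph{not} mean that for an arbitrary $A$-module $M$ you can find $\p_i\in\P$ with $H^0(\p_i)\twoheadrightarrow M$; the modules $H^0(\p_i)$ are constrained to be particular quotients of $\p_i^0$, not arbitrary quotients of arbitrary projectives. Without such a surjection the cone does not have the required one-degree-lower cohomology support, and the induction stalls. Relatedly, the termination claim $\y_{d+2}=0$ is asserted rather than proved; the heuristic that ``projective resolutions of the obstruction modules are exhausted after $d$ steps'' conflates the Postnikov filtration of a complex with the projective resolution over $A$ of its cohomology modules, and these do not match up degree-for-degree.

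A repairable version of your idea does exist — replace the ad hoc cohomology-surjections by genuine right $\P$-approximations, use $\Hom_\D(\p,\p[i])=0$ for $i>0$ to show the cone drops one degree in the co-$t$-structure, and then separately bound the length by the same $\Hom_\D(\m,\p[i])=0$ for $i\geq d+2$ argument the paper uses — but as written the argument has a hole at both the inductive step and the termination bound. Citing \cite[Proposition 2.23]{ai} (as the paper does) is the clean way to package the part of the argument you are implicitly trying to redo.
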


\begin{proof}
By \cite[Proposition 2.23]{ai}, we have $$\CP\subset\D^{\leq0}(\p)\subset\P\ast\P[1]\ast\cdots\ast\P[l-1]\ast\P[l]$$ for some $l>0$.
For any $\m$ in $\CP$, by Proposition~\ref{prop:summa} (d), we have $H^i(\m)=0$ for $i\neq-1,0$. So there is a complex $\x$ of projective $A$-modules, which is equivalent to $\m$, and such that $H^i(\x)=0$ for $i>0$ or $i<-d-1$. So
$$\Hom_\D(\m,\p[i])\cong\Hom_\D(\x,\p[i])=0, \text{ }i\geq d+2,$$
which implies that $\m$ is in $\P\ast\P[1]\ast\cdots\ast\P[d+1].$
\end{proof}

\begin{lemma}\label{lem:pdn+1}
For a complex $\x$ in $\CP \cap (\P_C\ast\P_C[1]\ast\cdots\P_C[m])$ for some $m\geq0$, we have $\pd\Hom_\D(\p,\x)_B\leq m$.
\end{lemma}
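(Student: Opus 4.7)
The plan is to proceed by induction on $m$. For the base case $m=0$, the complex $\x$ lies in $\P_C\subset\P=\add\p$, so $\Hom_\D(\p,\x)$ is a direct summand of $\Hom_\D(\p,\p^n)\cong B^n$ for some $n$; hence it is a projective $B$-module and $\pd\Hom_\D(\p,\x)=0$.

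For the inductive step with $m\geq 1$, I would use associativity of $\ast$ to decompose $\x\in\P_C\ast(\P_C[1]\ast\cdots\ast\P_C[m])$ and obtain a triangle
\[\p_0\to\x\to\y\to\p_0[1]\]
with $\p_0\in\P_C$ and $\y\in\P_C[1]\ast\cdots\ast\P_C[m]$. A short preliminary induction on the length of the $\ast$-decomposition of $\y$, using that any $\p'\in\P_C[i]$ with $i\geq 1$ satisfies $\Hom_\D(\p,\p'[k])=0$ for $k\geq 0$ (since then $k+i\neq 0$), shows $\Hom_\D(\p,\y[k])=0$ for all $k\geq 0$. Applying $\Hom_\D(\p,-)$ to the triangle above and using $\p_0,\x\in\CP$ together with $\Hom_\D(\p,\y)=0$, the long exact sequence collapses to
\[0\to\Hom_\D(\p,\y[-1])\to\Hom_\D(\p,\p_0)\to\Hom_\D(\p,\x)\to 0,\]
whose middle term is a projective $B$-module because $\p_0\in\P$.

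It will then suffice to bound $\pd\Hom_\D(\p,\y[-1])\leq m-1$ by the inductive hypothesis, since combining with the above short exact sequence yields $\pd\Hom_\D(\p,\x)\leq m$. By construction $\y[-1]\in\P_C\ast\cdots\ast\P_C[m-1]$, so the remaining nontrivial point is $\y[-1]\in\CP$. For this, note that the surjection $\Hom_\D(\p,\p_0)\twoheadrightarrow\Hom_\D(\p,\x)$ corresponds, under the equivalence of Proposition~\ref{prop:summa}(e), to an epimorphism $\p_0\to\x$ in the abelian category $\CP$; let $\x'\in\CP$ be its kernel there. The resulting triangle $\x'\to\p_0\to\x\to\x'[1]$ in $\D$ and the rotated original triangle $\y[-1]\to\p_0\to\x\to\y$ share the same morphism $\p_0\to\x$, so by uniqueness of the cone up to isomorphism $\x'\cong\y[-1]$, whence $\y[-1]\in\CP$.

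The main obstacle is precisely this last verification that $\y[-1]\in\CP$. A direct cohomological attack is awkward, because the cohomology of $\y$ is a priori spread over degrees $-m-1,\ldots,-1$, so $\y[-1]$ need not be visibly concentrated in degrees $-1$ and $0$. Identifying $\y[-1]$ with the $\CP$-kernel of the epimorphism $\p_0\to\x$ sidesteps this difficulty by transporting the question through the equivalence $\CP\simeq\mod B$.
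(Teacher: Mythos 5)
Your proof is correct and follows essentially the same approach as the paper's: peel off one $\P_C$-factor via the $\ast$-decomposition, observe that $\Hom_\D(\p,-)$ carries the resulting map $\p_0\to\x$ to a surjection of $B$-modules so that it is an epimorphism in $\CP$, and then iterate on the remaining piece. The only presentational difference is that the paper invokes Proposition~\ref{prop:summa}(c) somewhat tersely to conclude that the next term lies in $\CP$ and equals the $\CP$-kernel, while you make this explicit via the equivalence $\CP\simeq\mod B$ and uniqueness of cones; both arguments are sound.
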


\begin{proof}
Let $\x_0 =\x$. There are triangles
$$\x_{i+1} \rightarrow \oo_i \xrightarrow{g_i} \x_i \rightarrow \x_{i+1}[1], \ \ \ 0\leq i\leq m-1$$
where $\oo_i$ is in $\P_C$ and
$\x_i$ is in $\P_C\ast\P_C[1]\ast\cdots\ast\P_C[m-i]$.
Since $\Hom_\D(\p,\p[i])=0$ for all $i>0$, we have that $g_i$ is a right $\P$-approximation of $\x_i$. By Lemma \ref{lem:pc} and Lemma \ref{lem:proj1} (b), each $\oo_i$ is projective in $\CP$.
Assume that $\x_i$ is in $\CP$ for some $0\leq i\leq m-1$. Then,
since $g_i$ is a right $\P-$approximation and $\oo_i$ is projective in
$\CP$, we have that $g_i$ is an epimorphism in $\CP$.
So $\x_{i+1}$ is the kernel of $g_i$, by Proposition
\ref{prop:summa} (c).
Note that $\x_0\in\CP$. Then by induction on $i$, we have that $\x_i\in\CP$ for all $0\leq i\leq m$ and
\[\pd\Hom_\D(\p,\x_i)_B\leq\pd\Hom_\D(\p,\x_{i+1})_B+1.\]
Therefore $\pd\Hom_\D(\p,\x)_B\leq\pd\Hom_\D(\p,\x_{m})_B +m=m$ since $\x_{m}\in\P_C$ is projective in $\CP$.
\end{proof}

We end this section by considering the following special case. Recall from \cite{rick}, that a 2-term silting complex $\p$ in $K^b(\proj A)$ is a {\em tilting complex} if
$\Hom_{\D}(\p,\p[-1]) =0$.

\begin{proposition}\label{prop:tilt}
If the 2-term silting complex $\p$ is a tilting complex, then $\gld\End_\D(\p)\leq \gld A+1$.
\end{proposition}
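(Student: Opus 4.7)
The plan is to combine the tilting hypothesis with Lemmas \ref{lem:CPP} and \ref{lem:pdn+1} from this section, exploiting the equivalence in Proposition \ref{prop:summa}(e) to transport projective-dimension bounds from $\CP$ to $\mod B$.

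First I would observe that the extra vanishing $\Hom_\D(\p,\p[-1])=0$ coming from the tilting assumption, together with the silting vanishing $\Hom_\D(\p,\p[1])=0$, gives $\p \in \CP$ directly from the definition of $\CP$. Since $\CP$ is closed under direct summands (being equivalent to $\mod B$ via Proposition \ref{prop:summa}(e)), it follows that every object of $\P = \add \p$ lies in $\CP$, i.e.\ $\P = \P_C$. This is the key structural simplification in the tilting case.

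Next I would apply Lemma \ref{lem:CPP}, which states that $\CP \subset \P \ast \P[1] \ast \cdots \ast \P[d+1]$, where $d = \gld A$. Because $\P = \P_C$, this inclusion reads
\[
\CP \subset \P_C \ast \P_C[1] \ast \cdots \ast \P_C[d+1].
\]
Hence every $\x \in \CP$ satisfies the hypotheses of Lemma \ref{lem:pdn+1} with $m = d+1$, yielding $\pd \Hom_\D(\p,\x)_B \leq d+1$.

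Finally, to conclude, I would invoke Proposition \ref{prop:summa}(e): any $B$-module is of the form $\Hom_\D(\p,\x)$ for some $\x \in \CP$, so the previous step gives $\pd \m_B \leq d+1$ for every $\m \in \mod B$, i.e.\ $\gld B \leq \gld A + 1$. I do not anticipate any real obstacle here; the only subtle point is verifying that the tilting hypothesis really does force $\P \subset \CP$ (so that $\P_C = \P$), which is what unlocks Lemma \ref{lem:pdn+1} in its full strength. Once that identification is in place, the bound is immediate from the two preceding lemmas.
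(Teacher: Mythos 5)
Your proof is correct and follows essentially the same route as the paper's: establish $\p \in \CP$ from the tilting vanishing, deduce $\P = \P_C$, and then combine Lemma~\ref{lem:CPP} with Lemma~\ref{lem:pdn+1} (with $m = d+1$) via the equivalence of Proposition~\ref{prop:summa}(e). The paper states this more tersely, but your filled-in details are all accurate.
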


\begin{proof}
If $\p$ is tilting, then $\p$ is in $\CP$. So we infer that $\P=\P_C$. It follows from Lemma~\ref{lem:CPP} and Lemma~\ref{lem:pdn+1} that $\gld\End_\D(\p)\leq \gld A+1$.
\end{proof}

Note that the classical situation (as in \cite[III, section 3.4]{ha})
where $\p$ is the projective resolution of a classical tilting
module,
is covered by this result.

\section{The partial tilting case}

Throughout this section, we assume that $\pd H^0(\p)_A \leq 1$, that is: $H^0(\p)$ is a partial tilting $A$-module. Then we have that $Q=H^{-1}(\p)$ is projective as an $A$-module,
and $\p\cong H^0(\p)\oplus Q[1]$. Consider the canonical exact sequence of $Q$ relative to the torsion pair $(\T(\p),\F(\p))$:
\[0\rightarrow tQ\rightarrow Q\rightarrow Q/tQ\rightarrow 0.\]
As before, we let $d = \gld A$.
We first prove two technical lemmas.

\begin{lemma}\label{lem:XQ1}
With the above notation, we have
$$tQ\in\add H^0(\p)\ast\add H^0(\p)[1]\ast\cdots\ast\add H^0(\p)[d-1].$$

\end{lemma}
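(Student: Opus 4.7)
My plan is to iteratively construct an $\add H^0(\p)$-resolution of $tQ$ of length $d-1$. The driving bound is $\pd_A tQ \leq d-1$, obtained from the short exact sequence $0 \to tQ \to Q \to Q/tQ \to 0$ combined with $Q$ projective and $\pd Q/tQ \leq d$. I also exploit that, under $\pd H^0(\p) \leq 1$, one has $\p \cong H^0(\p)\oplus Q[1]$ in $\D$, so the silting condition $\Hom_\D(\p,\p[1]) = 0$ yields both $\Ext^1_A(H^0(\p),H^0(\p)) = 0$ and $\Hom_A(Q,H^0(\p)) = 0$; combined with $\pd H^0(\p) \leq 1$, the former forces $\Ext^{\geq 1}_A(H^0(\p),-)$ to vanish on all of $\T(\p)$.

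Setting $K_0 := tQ$, at each step $i \geq 0$ I would pick a surjective right $\add H^0(\p)$-approximation $H^0(\p)^{n_i} \twoheadrightarrow K_i$ (this is possible because $K_i \in \T(\p) = \Fac H^0(\p)$ by Proposition~\ref{prop:summa}(b), and $n_i$ can be enlarged to also cover $\Hom_A(H^0(\p),K_i)$ as an $\End(H^0(\p))$-module). Let $K_{i+1}$ be its kernel, giving a triangle $K_{i+1} \to H^0(\p)^{n_i} \to K_i \to K_{i+1}[1]$. By induction, $K_{i+1} \in \T(\p)$: the vanishing $\Ext^1_A(H^0(\p),K_{i+1}) = 0$ comes from the approximation property together with $\Ext^1_A(H^0(\p),K_i) = 0$, while $\Hom_A(Q,K_{i+1})$ embeds into $\Hom_A(Q,H^0(\p)^{n_i}) = 0$. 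Stacking the triangles gives $tQ \in \add H^0(\p)\ast\add H^0(\p)[1]\ast\cdots\ast\add H^0(\p)[i-1]\ast K_i[i]$ for every $i$, and the standard bound $\pd K_{i+1} \leq \max(1,\pd K_i - 1)$ drives the projective dimension downward as the resolution proceeds.

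The main obstacle is closing the induction after $d-1$ iterations by showing that the terminal kernel $K_{d-1}$ lies in $\add H^0(\p)$, which would promote the above to the desired $d$-layer decomposition via Lemma~\ref{lem:iy} (closure of the iterated $\ast$-product under extensions and summands). The $\pd$-estimate alone only yields $\pd K_{d-1} \leq 1$, and a generic module in $\T(\p)$ of projective dimension $\leq 1$ need not lie in $\add H^0(\p)$. One must therefore use that the chain originates from $tQ \subset Q$ with $Q$ projective: tracking how each $K_i$ embeds into a projective module should force $K_{d-1}$ itself to be projective, and projectivity combined with $K_{d-1} \in \Fac H^0(\p)$ makes any surjection $H^0(\p)^m \twoheadrightarrow K_{d-1}$ split, placing $K_{d-1} \in \add H^0(\p)$. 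Making this last step rigorous is the technical heart of the argument; the rest is bookkeeping in the torsion-pair framework.
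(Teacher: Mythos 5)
Your iterative set-up is essentially sound up to the point you flag as the "technical heart," and the ingredients you collect (each $K_i \in \T(\p) = \Fac H^0(\p)$, the $\pd$-bound $\pd K_{i+1} \leq \max(1,\pd K_i - 1)$, and the vanishing $\Hom_A(Q,H^0(\p))=0$ keeping the approximations inside $\add H^0(\p)$) are all correct. But the patch you propose for that last step does not go through, and the paper closes the argument in a genuinely different way.

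The problem with forcing $K_{d-1}$ to be projective: $K_1$ does not embed in a projective $A$-module. It embeds in $H^0(\p)^{n_0}$, and $H^0(\p)$ is only assumed to have $\pd \leq 1$. There is no reason for the chain $K_1 \subset H^0(\p)^{n_0}$, $K_2 \subset H^0(\p)^{n_1}$, etc.\ to eventually produce a projective kernel; the $\pd$-estimate saturates at $1$ from $\pd K_{d-2}$ onward (because the covering modules themselves have $\pd 1$), and Schanuel applied to the approximation sequence versus a projective cover only shows $K_i$ is a summand of $\Omega K_{i-1} \oplus H^0(\p)^{n_{i-1}}$ --- which is not contained in $\add H^0(\p)$. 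So the strategy "$K_{d-1}$ is projective, hence the surjection from $\add H^0(\p)$ splits" is not available.

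What the paper does instead is \emph{not} try to land the terminal kernel in $\add H^0(\p)$. It first obtains a longer decomposition $tQ \in \P * \P[1] * \cdots * \P[d+1]$ via Lemma~\ref{lem:CPP} (since $tQ \in \T(\p) \subset \C(\p)$), drops the $Q[1]$-pieces using $\Hom_\D(Q[1], tQ[i]) = 0$ for all $i$ (and $\Hom_A(Q,H^0(\p))=0$) to get $tQ \in \add H^0(\p) * \cdots * \add H^0(\p)[d+1]$, and then \emph{truncates the tail}: since $\pd_A tQ \leq d-1$, one has $\Ext^j_A(tQ, H^0(\p)) = 0$ for $j \geq d$, so the connecting map from $tQ$ to the top two layers of the tower vanishes; hence $tQ$ is a direct summand of an object of $\add H^0(\p) * \cdots * \add H^0(\p)[d-1]$, and this subcategory is closed under direct summands by Lemma~\ref{lem:iy} (applicable because $\Ext^{\geq 1}_A(H^0(\p),H^0(\p))=0$). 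Your construction could be rescued by the same idea: run the iteration one more step, note that $tQ \in \add H^0(\p) * \cdots * \add H^0(\p)[d-1] * K_d[d]$ with $\Hom_\D(tQ, K_d[d]) = \Ext^d_A(tQ,K_d) = 0$, and invoke closure under summands. But as written, relying on projectivity of $K_{d-1}$ is a genuine gap.
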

\begin{proof}
We first note that $tQ\in\T(\p)$, so by definition $\Hom_\DA(\p,tQ[i])=0$ for $i\neq 0$. In particular, we have $\Hom_\DA(Q[1],tQ[i])=0$ for $i\neq 0$. For $i=0$, since both $Q$ and $tQ$ are $A$-modules, we also have that $\Hom_\DA(Q[1],tQ)=0$. It follows from $T(\p)\in\C(\p)$ that by Proposition \ref{prop:summa}, we have $tQ\in\P\ast\P[1]\ast\cdots\ast\P[d+1]$. Therefore, using $\p\cong H^0(\p)\oplus Q[1]$, we get that $tQ$ is in
$\add H^0(\p)\ast\add H^0(\p)[1]\ast\cdots\ast\add H^0(\p)[d+1]$.
By the canonical sequence of $Q$, we have $\pd (tQ)_A \leq\pd (Q/tQ)_A -1 \leq d-1$. Hence, it follows that $\Hom(tQ,\p[d])= 0= \Hom(tQ,\p[d+1])$. The claim of the lemma
follows.
\end{proof}

\begin{lemma}\label{lem:CPP1}
With the above notation, we have
$\C(\p)\subset\P\ast\P[1]\ast\cdots\ast\P[d]\ast\add H^0(\p)[d+1].$
\end{lemma}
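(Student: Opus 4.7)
The plan is to upgrade Lemma~\ref{lem:CPP}, which already gives $\C(\p) \subseteq \P \ast \P[1] \ast \cdots \ast \P[d+1]$, by replacing only the final factor $\P[d+1]$ with the smaller $\add H^0(\p)[d+1]$. Since $\p \cong H^0(\p) \oplus Q[1]$ with $Q$ projective, every $\p_{d+1} \in \P$ splits as $X \oplus Y[1]$ with $X \in \add H^0(\p)$ and $Y \in \add Q$, so the issue is to show that the $\add Q[d+2]$-contribution in the last slot can always be absorbed into the earlier factors.

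The key input is the Hom-vanishing
\[
\Hom_\D(\m, Q[d+2]) = 0 \qquad \text{for every } \m \in \C(\p),
\]
which I would prove exactly as in the proof of Lemma~\ref{lem:CPP}: since $H^i(\m) = 0$ for $i \neq -1, 0$ and $\gld A = d$, we may replace $\m$ by a quasi-isomorphic complex of projective $A$-modules concentrated in degrees $[-d-1, 0]$; chain maps into $Q[d+2]$, which is concentrated in degree $-d-2$, must then be zero. This immediately gives $\Hom_\D(\m, Y[d+2]) = 0$ for every $Y \in \add Q$.

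Now, given $\m \in \C(\p)$, write the containment from Lemma~\ref{lem:CPP} as $\m \in \N \ast \P[d+1]$ with $\N := \P \ast \P[1] \ast \cdots \ast \P[d]$. This produces a triangle $\n \to \m \to \p_{d+1}[d+1] \to \n[1]$ with $\n \in \N$, and with $\p_{d+1} = X \oplus Y[1]$ as above, the map $\m \to X[d+1] \oplus Y[d+2]$ must have the form $(f, 0)$ by the vanishing. Taking the direct sum of the distinguished triangles $\m \xrightarrow{f} X[d+1] \to \cone(f) \to \m[1]$ and $0 \to Y[d+2] \xrightarrow{\id} Y[d+2] \to 0$ identifies
\[
\n[1] \;\cong\; \cone\bigl(\m \xrightarrow{(f,0)} X[d+1] \oplus Y[d+2]\bigr) \;\cong\; \cone(f) \oplus Y[d+2].
\]
Applying Lemma~\ref{lem:iy} to the sequence $\P[1], \P[2], \ldots, \P[d+1]$ (the required Hom-vanishings hold because $\p$ is silting) shows that $\N[1]$ is closed under direct summands, so $\cone(f) \in \N[1]$, equivalently $\cone(f)[-1] \in \N$. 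Rotating the triangle $\m \xrightarrow{f} X[d+1] \to \cone(f) \to \m[1]$ to
\[
\cone(f)[-1] \to \m \to X[d+1] \to \cone(f)
\]
exhibits $\m$ as an object of $\N \ast \add H^0(\p)[d+1] = \P \ast \P[1] \ast \cdots \ast \P[d] \ast \add H^0(\p)[d+1]$, which is the desired inclusion.

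I expect the main obstacle to be the cone identification $\cone((f,0)) \cong \cone(f) \oplus Y[d+2]$ together with the correct invocation of Lemma~\ref{lem:iy}: that is where the $\add Q[d+2]$-summand actually gets split off. Once these are in place, the rest of the argument is a routine rotation of triangles combined with the Hom-vanishing computed at the beginning.
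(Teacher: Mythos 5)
Your proof is correct and follows essentially the same route as the paper: both hinge on the vanishing $\Hom_\D(\m, Q[d+2]) = 0$ for $\m \in \C(\p)$, which the paper asserts suffices and you verify in detail by splitting the top factor $\P[d+1]$ via the decomposition $\p_{d+1} \cong X \oplus Y[1]$, identifying $\cone((f,0)) \cong \cone(f) \oplus Y[d+2]$, and invoking Lemma~\ref{lem:iy} to absorb the $Q$-summand. The paper establishes the vanishing by the Ext-argument $\pd H^i(\x)_A \le d$ rather than your explicit truncation to a projective complex in degrees $[-d-1,0]$, but the two are equivalent and the overall strategy is the same.
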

\begin{proof}
Using that $\p\cong H^0(\p)\oplus Q[1]$ in combination with Lemma
\ref{lem:CPP}, we only need to prove that $\Hom_\D(\x,Q[d+2])=0$ for $\x\in\CP$. This follows from $\pd H^i(\x)_A\leq d$ for $i=-1,0$ and $H^i(\x)=0$ for $i\neq-1,0$.
\end{proof}

We can now prove the main result of this section.

\begin{theorem}\label{thm:pd1}
If $\pd (H^0(\p))_A \leq 1$, then $\gld B\leq 2\gld A+2$.
\end{theorem}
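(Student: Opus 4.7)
The plan is to show that every $\x \in \C(\p)$ sits in a filtration of the form $\P_C \ast \P_C[1] \ast \cdots \ast \P_C[m]$ with $m \leq 2d+2$ (writing $d = \gld A$), and then apply Lemma \ref{lem:pdn+1}. Via the equivalence $\Hom_\D(\p,-)\colon \C(\p) \to \mod B$ of Proposition \ref{prop:summa}(e), every $B$-module is $\Hom_\D(\p,\x)$ for some such $\x$, and bounding its projective dimension by $m$ forces $\gld B \leq 2d+2$.

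Starting from Lemma \ref{lem:CPP1}, one has $\x \in \P \ast \P[1] \ast \cdots \ast \P[d] \ast \add H^0(\p)[d+1]$. Since $\p \cong H^0(\p) \oplus Q[1]$, every shift $\P[j]$ splits as $\add H^0(\p)[j] \oplus \add Q[j+1]$. I would refine the $\add Q[j+1]$ part via the canonical triangle $tQ[j+1] \to Q[j+1] \to (Q/tQ)[j+1] \to tQ[j+2]$ coming from $0 \to tQ \to Q \to Q/tQ \to 0$. Lemma \ref{lem:XQ1} (whose vanishing argument applies verbatim to any summand of $Q$ in place of $Q$ itself) gives $tQ[j+1] \in \add H^0(\p)[j+1] \ast \cdots \ast \add H^0(\p)[j+d]$, while $(Q/tQ)[j+1]$ belongs to $\add(Q/tQ)[1]$ shifted by $[j]$. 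Because $H^0(\p) \in \T(\p)$ and $Q/tQ \in \F(\p)$, both $\add H^0(\p)$ and $\add(Q/tQ)[1]$ lie inside $\P_C$. Thus each $\P[j]$ fits into an iterated extension built from shifts of $\P_C$ at levels in $\{j, j+1, \ldots, j+d\}$.

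Substituting these refinements into the filtration from Lemma \ref{lem:CPP1} places $\x$ into an iterated extension of $\P_C$ at various shifts bounded by $2d+2$, but generally not arranged in the non-decreasing order required by Lemma \ref{lem:pdn+1}. The main technical step is to sort these shifts. For this I use the silting vanishing $\Hom_\D(\p,\p[k]) = 0$ for $k > 0$, which gives $\Hom_\D(\P_C, \P_C[k]) = 0$ for $k > 0$: in any triangle $X \to Y \to Z \to X[1]$ with $X \in \P_C[a]$, $Z \in \P_C[b]$, and $a \geq b$, the connecting morphism $Z \to X[1]$ lies in $\Hom_\D(\P_C, \P_C[a+1-b]) = 0$ (since $a+1-b \geq 1$), so the triangle splits and $Y$ is equally realized by the split triangle with the two factors swapped. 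Iterated swapping arranges all shifts in non-decreasing order, placing $\x$ into $\P_C \ast \P_C[1] \ast \cdots \ast \P_C[m]$ with $m \leq 2d+2$; Lemma \ref{lem:pdn+1} then yields $\pd \Hom_\D(\p, \x)_B \leq 2d+2$, completing the proof.
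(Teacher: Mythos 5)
Your plan hinges on the assertion that ``both $\add H^0(\p)$ and $\add(Q/tQ)[1]$ lie inside $\P_C$,'' and the second half of this is false in general. By definition $\P_C = \P\cap\C(\p)=\add\p\cap\C(\p)$, and since $\p\cong H^0(\p)\oplus Q[1]$, an object of $\P$ concentrated in degree $-1$ must lie in $\add Q[1]$. But $Q/tQ$ is a \emph{quotient} of $Q$, not a direct summand, so $(Q/tQ)[1]$ is typically not in $\P$ at all. What is true is that $(Q/tQ)[1]$ is a summand of $\widetilde{\p}\cong H^0(\p)\oplus(Q/tQ)[1]$, the projective generator of $\C(\p)$ from Lemma~\ref{lem:proj1}; but $\add\widetilde{\p}$ strictly contains $\P_C$ in general, so you cannot feed the resulting filtration into Lemma~\ref{lem:pdn+1} as stated.

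This is not merely a bookkeeping slip, because the sorting step then loses its engine. The commutation $\Hom_\D(\P_C,\P_C[k])=0$ for $k\geq 1$ that you use to split triangles and reorder shifts comes precisely from $\P_C\subseteq\P$ together with the silting property $\Hom_\D(\p,\p[k])=0$; it does \emph{not} extend to $\add\widetilde{\p}$. Projectivity of $\widetilde{\p}$ in the heart $\C(\p)$ only gives $\Hom_\D(\widetilde{\p},\widetilde{\p}[1])=0$, while $\Hom_\D(\widetilde{\p},\widetilde{\p}[k])$ for $k\geq2$ can be nonzero. Concretely, your refinement of $\P[j]$ produces a factor $\add(Q/tQ)[j+1]$ sitting to the right of factors $\add H^0(\p)[j+1],\dots,\add H^0(\p)[j+d]$; moving it leftward past $\add H^0(\p)[a]$ with $a\geq j$ requires the vanishing of $\Hom_\D((Q/tQ)[1],H^0(\p)[a-j+1])\cong\Ext_A^{a-j}(Q/tQ,H^0(\p))$, and already for $a=j$ this is $\Hom_A(Q/tQ,H^0(\p))$, which has no reason to vanish (a torsion-free module can map onto a torsion module). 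So the bubble-sort stalls and the desired inclusion $\x\in\P_C\ast\cdots\ast\P_C[2d+2]$ (or its $\add\widetilde{\p}$-analogue) is not established. The paper's proof avoids this issue entirely: rather than sorting, it repeatedly takes a right $\P$-approximation $\e\to\x$, replaces $\e$ by its projectivization $\widetilde{\e}$ via the triangle $F[1]\to\e\to\widetilde{\e}\to F[2]$ of Lemma~\ref{lem:proj1}, and absorbs the extra piece $F[2]$ with $F\in\add tQ$ into the top of the filtration using Lemma~\ref{lem:XQ1}; after $d+1$ such steps one lands in $\CP\cap\left(\add H^0(\p)\ast\cdots\ast\add H^0(\p)[d+1]\right)$, where $\add H^0(\p)\subseteq\P_C$ does hold, and only then applies Lemma~\ref{lem:pdn+1}.
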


\begin{proof}
Let $\x$ be an object in $\CP$ with
$$\x\in\P\ast\cdots\ast\P[i]\ast\add H^0(\p)[i+1]\ast\cdots\ast\add H^0(\p)[d+1]$$
for some $0\leq i\leq d$. Then there is a triangle
$$\x_1 \rightarrow \e \s{g_{\x}}{\rightarrow} \x \rightarrow \x_1[1]$$
where $g_{\x}$ is a right $\P-$approximation of $\x$ and
$\x_1$ is in $$\P\ast\cdots\ast\P[i-1]\ast\add H^0(\p)[i]\ast\cdots\ast\add H^0(\p)[d].$$
Then $\Hom_\D(\p,g_{\x})$ is an epimorphism
and $\Hom_\D(\p,\e)$ is projective in $\mod B$.

Recall that $Q = H^{-1}(\p)$. Then,
by Lemma~\ref{lem:proj1} there is a triangle
\[F[1]\rightarrow \e\rightarrow \widetilde{\e} \rightarrow F[2]\]
where $F$ is in $\add tQ\subset\T(\p)\subset\CP$ and $\widetilde{\e}$ is projective in $\CP$. So $\Hom_\D(F[1],\x)=0$ since $\x\in\C(\p)$. It follows that the map $g_{\x}$ factors through the map $\e \rightarrow \widetilde{\e}$. Then,
by the octahedral axiom, we have the following commutative diagram of triangles:
$$\xymatrix{
&\x[-1]\ar@{=}[r]\ar[d]&\x[-1]\ar[d]\\
F[1]\ar[r]\ar@{=}[d]&\x_1\ar[r]\ar[d]&\x'\ar[r]\ar[d]&F[2]\ar@{=}[d]\\
F[1]\ar[r]&\e\ar[r]\ar[d]^{g_{\x}}&
\widetilde{\e}\ar[r]\ar[d]^{\widetilde{g}_{\x}}&F[2]\\
&\x\ar@{=}[r]&\x
}.$$
Then we have that
$$
\begin{array}{rcl}
\x'&\in&\add \x_1\ast\add F[2]\\
&\subset&\left(\P\ast\cdots\ast\P[i-1]\ast\add H^0(\p)[i]\ast\cdots\ast\add H^0(\p)[d]\right)\\
&&\ast\left(\add H^0(\p)\ast\cdots\ast\add H^0(\p)[d-1]\right)[2]\\
&=&\left(\P\ast\cdots\ast\P[i-1]\ast\add H^0(\p)[i]\ast\cdots\ast\add H^0(\p)[d]\right)\ast\add H^0(\p)[d+1]
\end{array}$$
where the inclusion is due to Lemma~\ref{lem:XQ1},
and the equality follows from $$\P\ast\cdots\ast\P[i-1]\ast\add H^0(\p)[i]\ast\cdots\ast\add H^0(\p)[d]$$ being
closed under extensions by Lemma \ref{lem:iy}.
Applying $\Hom_\D(\p,-)$ to the above diagram, we obtain a commutative diagram
$$
\xymatrix@C=0.3pc{
\Hom_\D(\p,\e)\ar[rr]^{\cong}\ar[dr]_{\Hom_\D(\p,g_{\x})}&&\Hom_\D(\p,\widetilde{\e})
\ar[dl]^{\Hom_\D(\p,\widetilde{g}_{\x})}\\
&\Hom_\D(\p,\x)&
}
$$
Using that the map $\Hom_\D(\p,g_{\x})$ is an epimorphism in
$\mod B$, it follows that the map $\widetilde{g}_{\x}$ is an epimorphism
in $\C(\p)$.
Then $\x'$ is the kernel of $\widetilde{g}_{\x}$ in $\CP$. Hence
$$\pd\Hom_\D(\p,\x)_B\leq\pd \Hom_\D(\p,\x')_B +1.$$
Using induction on $i$ and Lemma~\ref{lem:CPP1}, we have that for $\x\in\CP$, there is $\x'$ such that
\begin{equation}\label{l1}\x'\in\CP\cap\left(\add H^0(\p)\ast\add H^0(\p)[1]\ast\cdots\ast\add H^0(\p)[d+1]\right)\end{equation}
and $\pd\Hom_\D(\p,\x)_B \leq\pd \Hom_\D(\p,\x')_B+d+1$.
By Lemma~\ref{lem:pdn+1} and equation (\ref{l1}), we have $\pd\Hom_\D(\p,\x')_B \leq d+1$. It then follows that
$\pd\Hom_\D(\p,\x)_B\leq 2d+2$, and hence $\gld B\leq 2 d+2$.
\end{proof}

\section{The case of global dimension 2.}

In this section, we consider the case when $\gld A\leq 2$.
Our aim is to prove part (b) of Theorem \ref{Main1}, stating
that in this case we have that the global dimension is at most 7
for the endomorphism algebra of any 2-term silting complex.

We prepare by showing four technical lemmas.
Let $\P_C^{[0,1]} =\left(\P\ast\P[1]\right)\cap\C(\p)$.

\begin{lemma}\label{lem:pd1}
If $\x$ is in $\P_\C^{[0,1]}$, then $\pd \Hom_\D(\p,\x)_B \leq 1.$
\end{lemma}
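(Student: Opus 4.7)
The plan is to build an explicit length-$1$ projective resolution of $\Hom_\D(\p,\x)_B$ inside $\CP$, by replacing the terms of the defining triangle of $\x$ with their canonical ``tilde'' versions from Lemma~\ref{lem:proj1}. By definition $\x\in\P_C^{[0,1]}$ supplies a triangle
$$\p_1\to\p_0\to\x\to\p_1[1]$$
with $\p_0,\p_1\in\P$ and $\x\in\CP$. Applying $\Hom_\D(\p,-)$ to it and using the silting condition $\Hom_\D(\p,\p_1[1])=0$ already produces a surjection $\Hom_\D(\p,\p_0)\twoheadrightarrow\Hom_\D(\p,\x)$ of $B$-modules. The obstacle is that $\p_0\notin\P_C$ in general, so $\Hom_\D(\p,\p_0)$ is not yet projective over $B$. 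My aim is to lift this to a short exact sequence $0\to\widetilde{\p}_1\to\widetilde{\p}_0\to\x\to 0$ in $\CP$ whose outer terms are projective in $\CP$ (by Lemma~\ref{lem:pc} and Lemma~\ref{lem:proj1}(b)), and then apply the equivalence $\Hom_\D(\p,-)\colon\CP\to\mod B$ of Proposition~\ref{prop:summa}(e).

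First I will lift $\p_0\to\x$ along $\p_0\to\widetilde{\p}_0$. Using the triangle $F_0[1]\to\p_0\to\widetilde{\p}_0\to F_0[2]$ with $F_0=tH^{-1}(\p_0)\in\T(\p)$, the factoring reduces to the vanishing $\Hom_\D(F_0[1],\x)=0$. A short calculation using the canonical truncation triangle $H^{-1}(\x)\to\x[-1]\to H^0(\x)[-1]\to H^{-1}(\x)[1]$ and the fact that $H^{-1}(\x)\in\F(\p)$ (since $\x\in\CP$) reduces this further to $\Hom_A(F_0,H^{-1}(\x))=0$, which is immediate from the torsion pair $(\T(\p),\F(\p))$. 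Together with the iso $\Hom_\D(\p,\p_0)\cong\Hom_\D(\p,\widetilde{\p}_0)$ coming from the same triangle and the surjectivity noted above, this forces the resulting lift $\widetilde{g}_0\colon\widetilde{\p}_0\to\x$ to be an epimorphism in $\CP$.

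Next I will identify the kernel $K$ of $\widetilde{g}_0$ in $\CP$. The octahedral axiom applied to $\p_0\to\widetilde{\p}_0\to\x$ (whose composition has cone $\p_1[1]$) produces a triangle
$$F_0[1]\to\p_1\to K\to F_0[2].$$
Since $K$ is the $\CP$-kernel of an epimorphism in $\CP$, Proposition~\ref{prop:summa}(c) gives $K\in\CP$. Applying $\Hom_\D(\p,-)$ to this octahedral triangle and, in parallel, to the triangle $F_1[1]\to\p_1\to\widetilde{\p}_1\to F_1[2]$, and using $\Hom_\D(\p,F_i[j])=0$ for $j=1,2$, yields $\Hom_\D(\p,K)\cong\Hom_\D(\p,\widetilde{\p}_1)$ as $B$-modules; the equivalence then upgrades this to $K\cong\widetilde{\p}_1$ in $\CP$.

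This assembles into the desired short exact sequence $0\to\widetilde{\p}_1\to\widetilde{\p}_0\to\x\to 0$ in $\CP$ with both outer terms projective, giving $\pd\Hom_\D(\p,\x)_B\leq 1$. Equivalently, the same diagram exhibits $\x\in\P_C\ast\P_C[1]$, so Lemma~\ref{lem:pdn+1} with $m=1$ delivers the bound directly. The main technical hurdle I foresee is the vanishing $\Hom_\D(F_0[1],\x)=0$, where the torsion-pair condition really has to enter; the rest is bookkeeping with the octahedral axiom and with the translation between $\CP$ and $\mod B$ via Proposition~\ref{prop:summa}(e).
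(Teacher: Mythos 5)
Your argument is correct, but it takes a substantially longer route than the paper's, and the extra work is triggered by a misconception at the start. You write that since $\p_0\notin\P_C$ in general, ``$\Hom_\D(\p,\p_0)$ is not yet projective over $B$.'' That is false: for any $\p_0\in\P=\add\p$, the $B$-module $\Hom_\D(\p,\p_0)$ is a direct summand of $\Hom_\D(\p,\p^n)\cong B^n$, hence projective, with no reference to $\CP$ whatsoever. Projectivity of $\Hom_\D(\p,\p_0)$ as a $B$-module is an additive statement about $\add\p$; membership of $\p_0$ in $\CP$ is only needed if you insist on realising this projective $B$-module as the image of a projective object of the heart under the equivalence $\Hom_\D(\p,-)\colon\CP\to\mod B$.

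Once this is seen, the paper's proof collapses to a few lines: apply $\Hom_\D(\p,-)$ to the triangle $\oo_1\to\oo_0\to\x\to\oo_1[1]$, use $\Hom_\D(\p,\x[-1])=0$ (since $\x\in\CP$) and $\Hom_\D(\p,\oo_1[1])=0$ (since $\p$ is silting) to get a short exact sequence $0\to\Hom_\D(\p,\oo_1)\to\Hom_\D(\p,\oo_0)\to\Hom_\D(\p,\x)\to0$ with both outer terms already projective over $B$. Your detour, which passes to $\widetilde{\p}_0,\widetilde{\p}_1$, verifies $\Hom_\D(F_0[1],\x)=0$ via the truncation triangle and the torsion pair, and invokes the octahedral axiom to identify the kernel, is a correct and instructive way of exhibiting $\x\in\P_C\ast\P_C[1]$ so that Lemma~\ref{lem:pdn+1} applies; it also confirms that $\Hom_\D(\p,\p_i)\cong\Hom_\D(\p,\widetilde{\p}_i)$. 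But none of it is needed for this lemma. Internalising that $\add\p$ already supplies projectives in $\mod B$ is worth doing, since it is the same shortcut that makes the proof of Lemma~\ref{lem:pd3} manageable.
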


\begin{proof}
Since $\x$ is in $\P\ast\P[1]$, there is a triangle $\oo_1\rightarrow \oo_0\rightarrow \x\rightarrow \oo_1[1]$ with $\oo_0,\oo_1\in\P$. Applying the functor $\Hom_\D(\p,-)$ to this triangle, we get a long exact sequence
$$
\Hom_\D(\p,\x[-1]) \rightarrow \Hom_\D(\p,\oo_1) \rightarrow \Hom_\D(\p,\oo_0)
\rightarrow \Hom_\D(\p,\x) \rightarrow \Hom_\D(\p,\oo_1[1])
$$
where the first term is zero since $\x$ is in $\C(\p)$, and the last term is zero since $\Hom_\D(\p,\p[1])=0$. Therefore, $\pd \Hom_\D(\p,\x)_B\leq 1.$
\end{proof}

\begin{lemma}\label{lem:pd3}
If $\x$ is in $\CP \cap (\P_\C^{[0,1]}\ast\P_\C^{[0,1]}[1]\ast\P_\C^{[0,1]}[2])$, then
$\pd \Hom_\D(\p,\x)_B\leq3.$
\end{lemma}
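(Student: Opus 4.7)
The plan is to reduce to Lemma~\ref{lem:pdn+1} (with $m=3$) by unfolding each $\P_\C^{[0,1]}$-factor into $\P_C\ast\P_C[1]$ and then collapsing factors that land in the same shift.

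The first step is to prove $\P_\C^{[0,1]} \subseteq \P_C \ast \P_C[1]$. For $\a \in \P_\C^{[0,1]}$, Lemma~\ref{lem:pd1} gives $\pd\Hom_\D(\p,\a)_B \leq 1$, so there is a projective resolution $0 \to Q_1 \to Q_0 \to \Hom_\D(\p,\a) \to 0$ in $\mod B$. Via the equivalence $\Hom_\D(\p,-)\colon \CP \to \mod B$ from Proposition~\ref{prop:summa}(e), together with Lemma~\ref{lem:pc} and Lemma~\ref{lem:proj1}(b) identifying projective objects of $\CP$ with $\P_C = \add\widetilde{\p}$, this lifts to a short exact sequence $0 \to \widetilde{\oo}_1 \to \widetilde{\oo}_0 \to \a \to 0$ in $\CP$ with $\widetilde{\oo}_0,\widetilde{\oo}_1 \in \P_C$. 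By Proposition~\ref{prop:summa}(c) this is a triangle in $\D$, exhibiting $\a \in \P_C \ast \P_C[1]$.

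Feeding this inclusion into each factor (shifted appropriately) and invoking associativity of $\ast$ yields
\[
\P_\C^{[0,1]} \ast \P_\C^{[0,1]}[1] \ast \P_\C^{[0,1]}[2] \subseteq \P_C \ast \P_C[1] \ast \P_C[1] \ast \P_C[2] \ast \P_C[2] \ast \P_C[3].
\]
To collapse the repeated shifts, I next show $\P_C\ast\P_C = \P_C$: for $\oo,\oo' \in \P_C$ and a triangle $\oo \to Z \to \oo' \to \oo[1]$, the long exact sequence from $\Hom_\D(\p,-)$ forces $Z \in \CP$, and Proposition~\ref{prop:summa}(c) makes this a short exact sequence in $\CP$; projectivity of $\oo'$ in $\CP$ splits it, giving $Z \cong \oo \oplus \oo' \in \P_C$. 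Shifting and regrouping via associativity,
\[
\P_C \ast (\P_C[1]\ast\P_C[1]) \ast (\P_C[2]\ast\P_C[2]) \ast \P_C[3] \subseteq \P_C \ast \P_C[1] \ast \P_C[2] \ast \P_C[3].
\]
Hence $\x \in \CP \cap (\P_C \ast \P_C[1] \ast \P_C[2] \ast \P_C[3])$, and Lemma~\ref{lem:pdn+1} with $m=3$ delivers $\pd\Hom_\D(\p,\x)_B \leq 3$.

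The main potential obstacle is the bookkeeping of $\ast$-associativity when rewriting a three-term filtration through $\P_\C^{[0,1]}$-pieces as a six-term filtration through $\P_C$-pieces and then recompressing; each local step reduces to a single triangle, so the global argument is routine once the key inclusion $\P_\C^{[0,1]} \subseteq \P_C\ast\P_C[1]$ and the extension-closure $\P_C\ast\P_C = \P_C$ are in place.
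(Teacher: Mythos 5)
The identification $\P_C = \add\widetilde{\p}$ that underlies your first step is false in general, and this is where the argument breaks. Lemma~\ref{lem:pc} and Lemma~\ref{lem:proj1}(b) do show that $\add\widetilde{\p}$ is the class of projective objects in $\CP$ and that $\P_C \subseteq \add\widetilde{\p}$, but the reverse inclusion fails whenever $tH^{-1}(\p) \neq 0$: in that case $\widetilde{\p}$ has a non-projective term in degree $-1$, so it is not a two-term complex of projectives and therefore does not lie in $\P = \add\p$, let alone in $\P_C = \P \cap \CP$. Lifting a length-one projective resolution of $\Hom_\D(\p,-)$ applied to an object of $\P_\C^{[0,1]}$ through the equivalence thus produces projective covers in $\add\widetilde{\p}$, not in $\P_C$. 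The inclusion $\P_\C^{[0,1]} \subseteq \P_C \ast \P_C[1]$ is consequently unestablished, and Lemma~\ref{lem:pdn+1} cannot be invoked as stated.

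The strategy is salvageable but needs more than a citation. With $\add\widetilde{\p}$ in place of $\P_C$ throughout, your two auxiliary steps do hold: $\P_\C^{[0,1]} \subseteq \add\widetilde{\p} \ast \add\widetilde{\p}[1]$, and $\add\widetilde{\p} \ast \add\widetilde{\p} = \add\widetilde{\p}$ by the same splitting argument. You would then need a variant of Lemma~\ref{lem:pdn+1} with $\add\widetilde{\p}$ replacing $\P_C$; this does hold, because the only two facts about $\P_C$ its proof uses --- that its objects are projective in $\CP$, and that $\Hom_\D(\p,-)$ kills their positive shifts --- both hold for $\add\widetilde{\p}$ since $\widetilde{\p} \in \CP$. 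The paper takes a different, self-contained route: it applies $\Hom_\D(\p,-)$ to two triangles witnessing $\x \in \P_\C^{[0,1]}\ast\P_\C^{[0,1]}[1]\ast\P_\C^{[0,1]}[2]$, extracts two short exact sequences, and combines the resulting bounds with Lemma~\ref{lem:pd1}. As written, though, your proof has a genuine gap at its first step.
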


\begin{proof}
By $\x \in\P_\C^{[0,1]}\ast\P_\C^{[0,1]}[1]\ast\P_\C^{[0,1]}[2]$, there are triangles
\begin{equation}\label{trian1} \Ll \rightarrow \d_1\rightarrow \x \rightarrow \Ll[1]\end{equation}
and
\begin{equation}\label{trian2}\d_3\rightarrow \d_2\rightarrow \Ll \rightarrow \d_3[1]\end{equation}
with $\d_1, \d_2, \d_3\in\P_\C^{[0,1]}$ and $\Ll \in\P_\C^{[0,1]}\ast\P_\C^{[0,1]}[1]\subset\P\ast\P[1]\ast\P[2]$. Applying $\Hom_\D(\p,-)$ to triangle (\ref{trian1}), we obtain a long exact sequence
\begin{multline*}
\Hom_\D(\p,\x[-2]) \rightarrow \Hom_\D(\p,\Ll[-1]) \rightarrow\Hom_\D(\p,\d_1[-1])\rightarrow\Hom_\D(\p,\x[-1])\\
\rightarrow \Hom_\D(\p,\Ll) \rightarrow\Hom_\D(\p,\d_1)\rightarrow\Hom_\D(\p,\x)
\rightarrow\Hom_\D(\p,\Ll[1]).
\end{multline*}
We have $\Hom_\D(\p,\x[i])=0$ for $i=-1$ or $i=-2$, since $\x$ is in
$\CP$. Furthermore, we have $\Hom_\D(\p,\d_1[-1])=0$, by $\d_1\in\CP$ and $\Hom_\D(\p,\Ll[1])=0$ by $\Ll \in\P\ast\P[1]\ast\P[2]$. From this it follows that we have a short exact sequence
$$0\rightarrow\Hom_\D(\p,\Ll) \rightarrow \Hom_\D(\p,\d_1) \rightarrow \Hom_\D(\p,\x)\rightarrow0$$
and that $\Hom_\D(\p,\Ll[-1])=0$. Using this short exact sequence, it follows that
\begin{equation}\label{ineq1} \pd \Hom_\D(\p,\x)_B\leq\max\{\pd \Hom_\D(\p,\d_1)_B, \pd \Hom_\D(\p,\Ll)_B +1\}.\end{equation}
Applying $\Hom_\D(\p,-)$ to the triangle (\ref{trian2}), we obtain an exact sequence
$$0=\Hom_\D(\p,\Ll[-1])\rightarrow\Hom_\D(\p,\d_3)\rightarrow\Hom_\D(\p,\d_2)\rightarrow
\Hom_\D(\p,\Ll)\rightarrow\Hom_\D(\p,\d_3[1])$$
where the last term is zero due to $\d_3\in\P_\C^{[0,1]}$. As above,
we obtain that
\begin{equation}\label{ineq2} \pd \Hom_\D(\p,\Ll)_B \leq\max\{\pd \Hom_\D(\p,\d_2)_B, \pd \Hom_\D(\p,\d_3)_B+1\}.\end{equation}
Now, combining the inequalities (\ref{ineq1}) and (\ref{ineq2}) with Lemma~\ref{lem:pd1}, we obtain $\pd \Hom_\D(\p,\x)_B \leq3.$
\end{proof}

\begin{lemma}\label{lemma}
If $\n$ is in $\D^{\geq-1}(\p) \cap \left(\P\ast\P[1]\ast\P[2]\right)$, then there is an object $\widetilde{\n}\in\C(\p)$ such that $\Hom_\D(\p,\n)\cong\Hom_\D(\p,\widetilde{\n})$ as $B-$modules and $\widetilde{\n} \in\add \n\ast\P_C^{[0,1]}[2]$.
\end{lemma}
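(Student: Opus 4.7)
The construction will be as the cone of a morphism $\oo[1]\to\n$ for a suitable $\oo\in\P_C^{[0,1]}$, placing $\widetilde{\n}$ in the triangle
\[\n\to\widetilde{\n}\to\oo[2]\to\n[1].\]
Since $\oo\in\CP$ kills $\Hom_\D(\p,\oo[k])$ for $k\neq 0$, the long exact sequence from $\Hom_\D(\p,-)$ automatically yields $\Hom_\D(\p,\widetilde{\n})\cong\Hom_\D(\p,\n)$; membership of $\widetilde{\n}$ in $\CP$ then reduces to arranging two conditions: (i) $\Hom_\D(\p,\n[j])=0$ for every $j\neq 0,-1$, and (ii) the connecting map $\Hom_\D(\p,\oo)\to\Hom_\D(\p,\n[-1])$ induced by $\oo[1]\to\n$ is an isomorphism of $B$-modules. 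Condition (ii) forces the containment $\widetilde{\n}\in\add\n\ast\P_C^{[0,1]}[2]$ directly from the defining triangle.

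Condition (i) is immediate from the hypotheses: the case $j\leq -2$ is $\n\in\D^{\geq -1}(\p)$, while for $j\geq 1$ one uses that $\p$ is concentrated in two degrees, which forces $\Hom_\D(\p,\p[i])=0$ for $i\leq -2$ by degree and for $i=1$ by silting; propagating vanishing through the long exact sequences of the triangles witnessing $\n\in\P\ast\P[1]\ast\P[2]$ then gives the claim.

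For condition (ii), I would use the equivalence $\Hom_\D(\p,-)\colon\CP\xrightarrow{\sim}\mod B$ to realize the $B$-module $\Hom_\D(\p,\n[-1])$ as $\Hom_\D(\p,\oo)$ for some $\oo\in\P_C^{[0,1]}$. Any $B$-module $M$ with $\pd M_B\leq 1$ is so realizable: take a monic projective presentation $\Hom_\D(\p,\q_1)\hookrightarrow\Hom_\D(\p,\q_0)$ with $\q_0,\q_1\in\P_C$, lift to a triangle $\q_1\to\q_0\to\oo\to\q_1[1]$ in $\D$, and verify directly via the long exact sequence that $\oo\in\CP\cap(\P\ast\P[1])=\P_C^{[0,1]}$. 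Given such $\oo$, the surjection $\Hom_\D(\p,\q_0)\twoheadrightarrow\Hom_\D(\p,\n[-1])$ lifts, by Yoneda for $\q_0\in\P$, to a morphism $\q_0\to\n[-1]$ in $\D$; the composite $\q_1\to\q_0\to\n[-1]$ vanishes under $\Hom_\D(\p,-)$ and hence in $\D$ (as $\q_1\in\P$), so the morphism factors through $\oo$ to yield the required $\oo[1]\to\n$ with the iso inducing property on $\Hom_\D(\p,-)$.

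Thus the crux reduces to showing $\pd\Hom_\D(\p,\n[-1])_B\leq 1$. Choosing a triangle $\y\to\n\to\oo_2[2]\to\y[1]$ with $\y\in\P\ast\P[1]$ and $\oo_2\in\P$ realizing $\n\in\P\ast\P[1]\ast\P[2]$, the long exact sequence from $\Hom_\D(\p,-)$ at shift $-1$, combined with $\Hom_\D(\p,\n[-2])=0$ and $\Hom_\D(\p,\oo_2[1])=0$, produces a short exact sequence
\[0\to\Hom_\D(\p,\oo_2)\to\Hom_\D(\p,\y[-1])\to\Hom_\D(\p,\n[-1])\to 0\]
in which the first term is projective; this reduces the bound to $\pd\Hom_\D(\p,\y[-1])_B\leq 1$. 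I would prove the latter by a parallel analysis for $\y\in\P\ast\P[1]$, invoking Lemma~\ref{lem:proj1} to replace the two constituent $\P$-pieces by their $\P_C$-counterparts and exploiting the bound $\pd tH^{-1}(\oo)_A\leq 1$ afforded by $\gld A\leq 2$. The main obstacle of the whole argument is this final projective-dimension bound, where the interplay between the hypothesis $\n\in\D^{\geq -1}(\p)$ and the shift-$\P$-filtration of $\n$ is essential.
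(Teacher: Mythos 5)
Your construction is a genuinely different route from the paper's. The paper produces $\widetilde{\n}$ intrinsically as the $t$-structure truncation of $\n$ with respect to $(\D^{\leq0}(\p),\D^{\geq0}(\p))$, and then shows \emph{a posteriori} that the complementary piece $\m$ lands in $\P_C^{[0,1]}[1]$ by playing the two hypotheses $\n\in\D^{\geq-1}(\p)$ and $\n\in\P\ast\P[1]\ast\P[2]$ against each other (the first controls $\Hom_\D(\p,\m[i])$ through the long exact sequence of $\Hom_\D(\p,-)$, the second controls $\Hom_\D(\m,\p[i])$ through $\Hom_\D(-,\p)$). You instead try to build $\oo\in\P_C^{[0,1]}$ by hand from a length-one projective $B$-resolution of the module $\Hom_\D(\p,\n[-1])$ and then realize $\widetilde{\n}$ as the cone of the induced map $\oo[1]\to\n$. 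Several of your intermediate steps are correct and would be worth recording: the verification that $\Hom_\D(\p,\n[j])=0$ for $j\geq1$, the fact that any $B$-module of projective dimension at most one arises as $\Hom_\D(\p,\oo)$ for some $\oo\in\P_C^{[0,1]}$, and the Yoneda-type lifting argument producing the map $\oo\to\n[-1]$ with the desired effect on $\Hom_\D(\p,-)$.

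However, the argument has a genuine gap exactly at the point you flag as the crux, namely the claim $\pd\Hom_\D(\p,\n[-1])_B\leq1$. Your reduction produces the short exact sequence
\[
0\to\Hom_\D(\p,\oo_2)\to\Hom_\D(\p,\y[-1])\to\Hom_\D(\p,\n[-1])\to0
\]
correctly, but it transfers the burden to showing $\pd\Hom_\D(\p,\y[-1])_B\leq1$ for $\y\in\P\ast\P[1]$, and at that point the hypothesis $\n\in\D^{\geq-1}(\p)$ — which you yourself identify as essential — has been lost: a $\y$ occurring in the filtration of $\n$ need not lie in $\D^{\geq-1}(\p)$ (indeed $\Hom_\D(\p,\y[-2])\cong\Hom_\D(\p,\oo_2[-1])$ can be nonzero). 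For the one-step analogue, the long exact sequence from a triangle $\oo_1\to\oo_0\to\y\to\oo_1[1]$ applied to $\Hom_\D(\p,-[-1])$ does not truncate to a short exact sequence with projective left end, because the term $\Hom_\D(\p,\oo_0[-1])$ is in general nonzero when $\p$ is silting but not tilting; one only gets $\Hom_\D(\p,\y[-1])$ as an extension of a kernel of a map of projective $B$-modules by a cokernel of a map of modules of projective dimension one, and neither piece is controlled by $\gld A\leq 2$ alone. The sketch about replacing the $\oo_i$ by $\widetilde{\oo_i}$ via Lemma~\ref{lem:proj1} does not resolve this, since the replacement does not commute with the map $\oo_1\to\oo_0$ in the required way and in any case does not produce the needed exactness. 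In the paper, the inequality $\pd\Hom_\D(\p,\n[-1])_B\leq1$ falls out only after one already knows $\m[-1]\in\P_C^{[0,1]}$, and the proof of that containment uses the $\D^{\geq-1}$ hypothesis in an essential way that your reduction has discarded. As it stands the argument is incomplete; to salvage it you would need a direct proof that $\pd\Hom_\D(\p,\n[-1])_B\leq1$ using the full hypothesis on $\n$, which would likely end up re-deriving the paper's $t$-structure argument.
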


\begin{proof}
Since $(\D^{\leq0}(\p),\D^{\geq0}(\p))$ is a $t$-structure (see \cite[Lemma 5.10]{ky}), there is a triangle
\begin{equation}\label{t-tria}
\m\rightarrow \n \rightarrow \widetilde{\n}\rightarrow \m[1]\end{equation}
with $\m \in\D^{\leq0}(\p)[1]$ and $\widetilde{\n}\in\D^{\geq0}(\p)$. Then $\m\in\P[1]\ast\cdots\ast\P[l]$ for some $l$ by \cite[Proposition 2.23]{ai}.
Applying the functor $\Hom_\D(\p,-)$ to the triangle (\ref{t-tria}), we have a long exact sequence
$$
\cdots\rightarrow\Hom_\D(\p,\m[i])\rightarrow\Hom_\D(\p,\n[i])\rightarrow
\Hom_\D(\p,\widetilde{\n}[i])\rightarrow\Hom_\D(\p,\m[i+1])\rightarrow\cdots
$$
Since $\Hom_\D(\p,\m[i])=0$ for $i\geq0$ by $\m\in\P[1]\ast\cdots\ast\P[l]$ and $\Hom_\D(\p,\widetilde{\n}[i])=0$ for $i<0$ by $\widetilde{\n}\in\D^{\geq0}(\p)$, and also  $\Hom_\D(\p,\n[i])=0$ for $i\neq-1,0$ by the assumption $\n\in\D^{\geq-1}(\p) \cap \left(\P\ast\P[1]\ast\P[2]\right)$, we have that \[\Hom_\D(\p,\n)\cong\Hom_\D(\p,\widetilde{\n})\] as $B-$modules,\[\Hom_\D(\p,\widetilde{\n}[i])=0, \text{ for }i>0,\] and \[\Hom_\D(\p,\m[i])=0\text{, for }i<-1.\]
Thus, we obtain $\widetilde{\n}\in\D^{\leq0}(\p)\cap\D^{\geq0}(\p)=\CP$ and $\m\in\D^{\geq0}(\p)[1]\cap\D^{\leq0}(\p)[1]=\CP[1]$. Then by Lemma~\ref{lem:CPP}, we have $\widetilde{\n}\in\P\ast\P[1]\ast\P[2]\ast\P[3]$. Applying the functor $\Hom_\D(-,\p)$ to the triangle (\ref{t-tria}), we obtain a long exact sequence
\[
\cdots\rightarrow\Hom_\D(\widetilde{\n},\p[i])\rightarrow\Hom_\D(\n,\p[i])\rightarrow
\Hom_\D(\m,\p[i])\rightarrow\Hom_\D(\widetilde{\n},\p[i+1])\rightarrow\cdots
\]
We have $\Hom_\D(\n,\p[i])=0$ for $i>2$ by $\n \in \P \ast \P[1] \ast \P[2]$, and we have $\Hom_\D(\widetilde{\n},\p[i])=0$ for $i>3$ by $\widetilde{\n}\in\P\ast\P[1]\ast\P[2]\ast\P[3]$. From this it follows
that $\Hom_\D(\m,\p[i])=0$ for $i>2$, and hence
$\m\in(\P[1]\ast\P[2])\cap\CP[1]=\P_C^{[0,1]}[1]$. Therefore we have that
\[\widetilde{\n}\in\add \n \ast\add \m[1]\subset\add \n\ast\P_C^{[0,1]}[2].\]
\end{proof}

\begin{lemma}\label{lem:ind}
Let $\x\in\C(\p)\cap\left(\P\ast\P[1]\ast\cdots\ast\P[t]\ast\H[t+1]\right)$ for some $t$ with $0\leq t\leq3$, where $\H\subset\P\ast\P[1]\ast\cdots\ast\P[2-t]$ (and $\H=0$ for $t=3$). Then for each $r$ with $0\leq r\leq \min\{t+1,3\}$, there is an object $\widetilde{\x}_r\in\CP$ such that
\[\pd\Hom_\D(\p,\x)_B\leq\pd\Hom_\D(\p,\widetilde{\x}_r)_B+r\]
and
\[\widetilde{\x}_r\in\P\ast\P[1]\ast\cdots\ast\P[t-r]\ast\H[t+1-r]\ast\P_\C^{[0,1]}[3-r]\ast\cdots\ast\P_\C^{[0,1]}[2]\]
where $\P\ast\P[1]\ast\cdots\ast\P[t-r]$ is taken to be 0 when $r=t+1$ and $\P_\C^{[0,1]}[3-r]\ast\cdots\ast\P_\C^{[0,1]}[2]$ is taken to be 0 when $r=0$.
\end{lemma}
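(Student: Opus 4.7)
The plan is to induct on $r$. The base case $r=0$ is immediate: set $\widetilde{\x}_0=\x$, so that the containment is exactly the hypothesis on $\x$ (the trailing $\P_\C^{[0,1]}$-block is empty by convention) and the inequality is trivial.

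For the inductive step, assume $\widetilde{\x}_r$ is constructed and $r+1\leq\min\{t+1,3\}$, so $r\leq t$ and $r\leq 2$. I would first peel off the leading $\P$-factor: write $\widetilde{\x}_r\in\P\ast\Y$ with
$$\Y=\P[1]\ast\cdots\ast\P[t-r]\ast\H[t+1-r]\ast\P_\C^{[0,1]}[3-r]\ast\cdots\ast\P_\C^{[0,1]}[2].$$
Every factor of $\Y$ is concentrated in strictly positive shifts (using $t+1-r\geq 1$ and $3-r\geq 1$ together with $\H\subset\P\ast\cdots\ast\P[2-t]$ and $\P_\C^{[0,1]}\subset\P\ast\P[1]$), hence $\Hom_\D(\p,\Y)=0$ by silting. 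The structure triangle $\P_0\to\widetilde{\x}_r\to\m\to\P_0[1]$ therefore exhibits $\P_0\to\widetilde{\x}_r$ as a right $\P$-approximation, and rotating gives $\x_1\to\P_0\to\widetilde{\x}_r\to\x_1[1]$ with $\x_1=\m[-1]$ and
$$\x_1\in\P\ast\cdots\ast\P[t-r-1]\ast\H[t-r]\ast\P_\C^{[0,1]}[2-r]\ast\cdots\ast\P_\C^{[0,1]}[1].$$

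The decisive step is to feed $\x_1$ into Lemma~\ref{lemma}, whose two hypotheses I would verify as follows: (i) every factor in the expansion of $\x_1$ is concentrated in shifts $\{0,1,2\}$ (using $t\leq 3$, $r\leq t$, $r\leq 2$, $\H\subset\P\ast\cdots\ast\P[2-t]$, and $\P_\C^{[0,1]}\subset\P\ast\P[1]$), so $\x_1\in\P\ast\P[1]\ast\P[2]$ by closure under extensions (Lemma~\ref{lem:iy}); (ii) the long exact sequence from the triangle, combined with $\Hom_\D(\p,\widetilde{\x}_r[j-1])=0$ (from $\widetilde{\x}_r\in\CP$) and the direct computation $\Hom_\D(\p,\P_0[j])=0$ for $j\leq -2$ (as $\P_0$ is a two-term complex in degrees $-1,0$), yields $\x_1\in\D^{\geq-1}(\p)$. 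Lemma~\ref{lemma} then produces $\widetilde{\x}_{r+1}\in\CP$ fitting in a triangle $\x_1\to\widetilde{\x}_{r+1}\to\m'\to\x_1[1]$ with $\m'\in\P_\C^{[0,1]}[2]$ and an isomorphism $\Hom_\D(\p,\x_1)\cong\Hom_\D(\p,\widetilde{\x}_{r+1})$; splicing this with the filtration of $\x_1$ and using associativity of $\ast$ to merge $\P_\C^{[0,1]}[2-r]\ast\cdots\ast\P_\C^{[0,1]}[1]$ with the new $\P_\C^{[0,1]}[2]$, I obtain the containment claimed at step $r+1$.

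For the projective dimension inequality, apply $\Hom_\D(\p,-)$ to the triangle $\x_1\to\P_0\to\widetilde{\x}_r\to\x_1[1]$. The vanishing of $\Hom_\D(\p,\widetilde{\x}_r[-1])$ (from $\widetilde{\x}_r\in\CP$) and of $\Hom_\D(\p,\x_1[1])$ (from $\x_1[1]\in\P[1]\ast\P[2]\ast\P[3]$ together with silting) collapses the long exact sequence to
$$0\to\Hom_\D(\p,\x_1)\to\Hom_\D(\p,\P_0)\to\Hom_\D(\p,\widetilde{\x}_r)\to 0$$
in $\mod B$, whose middle term is projective. Hence $\pd\Hom_\D(\p,\widetilde{\x}_r)_B\leq\pd\Hom_\D(\p,\x_1)_B+1=\pd\Hom_\D(\p,\widetilde{\x}_{r+1})_B+1$, and combining with the inductive hypothesis closes the induction. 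The main obstacle I anticipate is the bookkeeping for condition (i): verifying that all shifts remain in $\{0,1,2\}$ uniformly across the boundary cases $r=t$, $r=2$, and $\H=0$ when $t=3$; once this is in place, Lemma~\ref{lemma} and the long exact sequence make the rest routine.
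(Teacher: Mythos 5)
Your proof is correct and follows essentially the same route as the paper's: strip a leading $\P$-factor from $\widetilde{\x}_r$ via a structure triangle, read off a short exact sequence in $\mod B$ and the $\D^{\geq-1}(\p)$ and $\P\ast\P[1]\ast\P[2]$ conditions from the long exact $\Hom_\D(\p,-)$-sequence and Lemma~\ref{lem:iy}, and then apply Lemma~\ref{lemma} to return to $\CP$ at the cost of an extra $\P_\C^{[0,1]}[2]$-factor. The only surface differences are the shift of induction index (you pass from $r$ to $r+1$ while the paper passes from $r-1$ to $r$) and your unused remark that the map $\P_0\to\widetilde{\x}_r$ is a right $\P$-approximation.
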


\begin{proof}
Let $\widetilde{\x}_0=\x$. Then $\widetilde{\x}_0$ satisfies the conditions in the lemma. Assume that $\widetilde{\x}_{r-1}$ satisfying the conditions. By
$$\widetilde{\x}_{r-1}\in\P\ast\P[1]\ast\cdots\ast\P[t-(r-1)]\ast\H[t+1-(r-1)]\ast\P_\C^{[0,1]}[3-(r-1  )]\ast\cdots\ast\P_\C^{[0,1]}[2],$$ there is a triangle
\[\x_r\rightarrow \p_0\rightarrow \widetilde{\x}_{r-1}\rightarrow \x_r[1]\]
with $\p_0\in\P$, $\x_r\in\P\ast\cdots\ast\P[t-r]\ast\H[t+1-r]\ast\P_\C^{[0,1]}[3-r]\ast\cdots\ast\P_\C^{[0,1]}[1]\subset\P\ast\P[1]\ast\P[2]$. The inclusion follows from Lemma \ref{lem:iy}.
Applying $\Hom_\D(\p,-)$ to this triangle, we have a long exact sequence
\[\cdots\rightarrow\Hom_\D(\p,\x_r[i])\rightarrow\Hom_\D(\p,\p_0[i])\rightarrow
\Hom_\D(\p,\widetilde{\x}_{r-1}[i])\rightarrow\Hom_\D(\p,\x_r[i+1])\rightarrow\cdots   \]
Since $\Hom_\D(\p,\widetilde{\x}_{r-1}[i])=0$ for $i\neq0$ by $\widetilde{\x}_{r-1}\in\CP$, $\Hom_\D(\p,\x_r[1])=0$
by $\x_r \in \P\ast\P[1]\ast \P[2]$, and also $\Hom_\D(\p,\p_0[i])=0$ for $i<-1$ by $\p$ being 2-term, we have a short exact sequence
\[0\rightarrow\Hom_\D(\p,\x_r)\rightarrow\Hom_\D(\p,\p_0)\rightarrow\Hom_\D(\p,\widetilde{\x}_{r-1})\rightarrow0\]
and
\[\Hom_\D(\p,\x_r[i])=0\text{ for $i<-1$.}\]
Then $\pd\Hom_\D(\p,\widetilde{\x}_{r-1})_B\leq\pd \Hom_\D(\p,\x_r)_B+1$
and by Lemma \ref{lemma}, there is an object $\widetilde{\x}_r\in\C(\p)$ such that $\Hom_\D(\p,\widetilde{\x}_r)_B \cong\Hom_\D(\p,\x_r)_B$ and $$\widetilde{\x}_r\in\add \x_r\ast\P^{[0,1]}_\C[2]\subset\P\ast\P[1]\ast\cdots\ast\P[t-r]\ast\H[t+1-r]\ast\P_\C^{[0,1]}[3-r]\ast\cdots\ast\P_\C^{[0,1]}[1]\ast\P^{[0,1]}_\C[2].$$
\end{proof}

Now we prove the main result in this section.

\begin{theorem}\label{thm:2-silt}
If $\gld A\leq 2$, then $\gld \End_\D(\p)\leq 7$ for any 2-term silting complex $\p$ in $K^b(\proj A)$.
\end{theorem}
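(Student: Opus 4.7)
Any $\x \in \CP$ lies in $\P \ast \P[1] \ast \P[2] \ast \P[3]$ by Lemma~\ref{lem:CPP} (since $d = 2$). The plan is to apply Lemma~\ref{lem:ind} with $t = 2$, $\H = \P$ and the maximal step $r = 3$: this produces $\widetilde{\x} \in \CP \cap (\P \ast \P_\C^{[0,1]} \ast \P_\C^{[0,1]}[1] \ast \P_\C^{[0,1]}[2])$ with
$$\pd \Hom_\D(\p, \x)_B \leq \pd \Hom_\D(\p, \widetilde{\x})_B + 3.$$
Since $\Hom_\D(\p, -)\colon \CP \to \mod B$ is an equivalence by Proposition~\ref{prop:summa}(e), the bound $\gld B \leq 7$ will follow once I show $\pd \Hom_\D(\p, \widetilde{\x})_B \leq 4$ for every such $\widetilde{\x}$.

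The key step is to replace the leading $\P$-factor by one in $\P_C$. From the decomposition of $\widetilde{\x}$, take a triangle $\p_0 \to \widetilde{\x} \to \y \to \p_0[1]$ with $\p_0 \in \P$ and $\y \in \P_\C^{[0,1]} \ast \P_\C^{[0,1]}[1] \ast \P_\C^{[0,1]}[2]$. Lemma~\ref{lem:proj1} supplies a triangle $tH^{-1}(\p_0)[1] \to \p_0 \to \widetilde{\p}_0 \to tH^{-1}(\p_0)[2]$ with $\widetilde{\p}_0 \in \P_C$. The octahedral axiom applied to these two triangles (sharing $\p_0$) produces an object $Q$ with triangles
$$tH^{-1}(\p_0)[1] \to \widetilde{\x} \to Q \to tH^{-1}(\p_0)[2] \quad \text{and} \quad \widetilde{\p}_0 \to Q \to \y \to \widetilde{\p}_0[1].$$
Since $tH^{-1}(\p_0) \in \T(\p) \subset \CP$, applying $\Hom_\D(\p,-)$ to the first triangle gives an isomorphism $\Hom_\D(\p, \widetilde{\x}) \cong \Hom_\D(\p, Q)$ of $B$-modules, so it suffices to bound $\pd \Hom_\D(\p, Q)_B \leq 4$.

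Applying $\Hom_\D(\p, -)$ to the second triangle, the vanishings $\Hom_\D(\p, \widetilde{\p}_0[i]) = 0$ for $i \neq 0$ collapse the long exact sequence to
$$0 \to \Hom_\D(\p, \y[-1]) \to \Hom_\D(\p, \widetilde{\p}_0) \to \Hom_\D(\p, \widetilde{\x}) \to \Hom_\D(\p, \y) \to 0,$$
with $\Hom_\D(\p, \widetilde{\p}_0)$ projective in $\mod B$ by Proposition~\ref{prop:summa}(e). Splitting this into two short exact sequences yields $\pd \Hom_\D(\p, Q)_B \leq \max\{\pd \Hom_\D(\p, \y)_B,\ \pd \Hom_\D(\p, \y[-1])_B + 1\}$. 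The main obstacle is to bound both $\pd \Hom_\D(\p, \y)_B$ and $\pd \Hom_\D(\p, \y[-1])_B$ by $3$: since $\y \notin \C(\p)$, Lemma~\ref{lem:pd3} does not apply directly. The plan is to peel off the $\P_\C^{[0,1]}$-layers of $\y$ one at a time via the triangle decomposition $\m_1 \to \y \to \m_2$ with $\m_1 \in \P_\C^{[0,1]}$ and $\m_2 \in \P_\C^{[0,1]}[1] \ast \P_\C^{[0,1]}[2]$, iterating Lemma~\ref{lem:pd1} while carefully tracking how the nonzero terms $\Hom_\D(\p, \y[-1])$ and $\Hom_\D(\p, \y[-2])$ propagate through the long exact sequences. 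Granting these bounds, $\pd \Hom_\D(\p, Q)_B \leq 4$, hence $\pd \Hom_\D(\p, \x)_B \leq 7$, and therefore $\gld B \leq 7$.
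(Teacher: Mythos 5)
Your outline diverges from the paper's in one key choice, and that choice creates a gap that the lemmas you have available cannot close.

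You apply Lemma~\ref{lem:ind} with $t=2$, $\H=\P$, $r=3$ to land in $\widetilde{\x}\in\CP\cap(\P\ast\P_\C^{[0,1]}\ast\P_\C^{[0,1]}[1]\ast\P_\C^{[0,1]}[2])$ at a cost of $+3$. The octahedral step replacing the leading $\P$-factor by $\widetilde{\p}_0\in\P_C$, and the observation that $\Hom_\D(\p,tH^{-1}(\p_0)[1])=0=\Hom_\D(\p,tH^{-1}(\p_0)[2])$ (the latter because $\p$ is 2-term and $tH^{-1}(\p_0)$ is a module), giving $\Hom_\D(\p,\widetilde{\x})\cong\Hom_\D(\p,Q)$, is correct. (One small point: to conclude that your four-term sequence begins with $0$ you need $\Hom_\D(\p,Q[-1])=0$, which comes from the \emph{first} triangle and $\widetilde{\x}\in\CP$, not from the vanishing of $\Hom_\D(\p,\widetilde{\p}_0[i])$.) The resulting bound $\pd\Hom_\D(\p,Q)_B\leq\max\{\pd\Hom_\D(\p,\y)_B,\ \pd\Hom_\D(\p,\y[-1])_B+1\}$ is also correct.

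The gap is the final step: you need $\pd\Hom_\D(\p,\y)_B\leq 3$ (or $\leq 4$) and $\pd\Hom_\D(\p,\y[-1])_B\leq 3$, but you only sketch a peeling plan and say you would ``track how the nonzero terms propagate.'' This is precisely where the argument fails to close. Your $\y$ lies in $\P_\C^{[0,1]}\ast\P_\C^{[0,1]}[1]\ast\P_\C^{[0,1]}[2]\subset\CP\ast\CP[1]\ast\CP[2]$. It is not in $\CP$ (so Lemmas~\ref{lem:pd1} and~\ref{lem:pd3} do not apply), and crucially it is generally not in $\D^{\geq-1}(\p)$ either, because the $\P_\C^{[0,1]}[2]$-layer can produce $\Hom_\D(\p,\y[-2])\neq 0$. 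Indeed, applying $\Hom_\D(\p,-)$ to the triangle $\p_0\to\widetilde{\x}\to\y\to\p_0[1]$ shows $\Hom_\D(\p,\y[-2])\cong\Hom_\D(\p,\p_0[-1])$, which is nonzero whenever $H^{-1}(\p_0)\neq 0$. Since $\y\notin\D^{\geq-1}(\p)\cap(\P\ast\P[1]\ast\P[2])$, Lemma~\ref{lemma} -- the only tool in the paper for producing a $\CP$-object with the same $\Hom_\D(\p,-)$ -- does not apply, and the truncation argument in its proof breaks at exactly the step that needs $\Hom_\D(\p,\n[i])=0$ for $i<-1$. Your $\Hom_\D(\p,\y[-1])$ is merely a submodule of the projective $\Hom_\D(\p,\widetilde{\p}_0)$, and submodules of projectives over $B$ have no a priori bound on projective dimension.

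The paper avoids this by taking $r=2$ (not $r=3$) in the first application of Lemma~\ref{lem:ind}, obtaining $\widetilde{\x}\in\P\ast\P[1]\ast\P_\C^{[0,1]}[1]\ast\P_\C^{[0,1]}[2]$ at a cost of only $+2$, and then splitting this as $(\P\ast\P[1])\ast(\P_\C^{[0,1]}[1]\ast\P_\C^{[0,1]}[2])$. Both pieces $\y\in\P\ast\P[1]$ and $\z\in\P_\C^{[0,1]}\ast\P_\C^{[0,1]}[1]$ then lie in $\D^{\geq-1}(\p)\cap(\P\ast\P[1]\ast\P[2])$, which is the exact hypothesis of Lemma~\ref{lemma}; applying it (and then Lemma~\ref{lem:ind} once more to the resulting $\widetilde{\y}$) reduces everything to objects of $\CP\cap(\P_\C^{[0,1]}\ast\P_\C^{[0,1]}[1]\ast\P_\C^{[0,1]}[2])$, where Lemma~\ref{lem:pd3} gives the bound $3$. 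To salvage your route you would need to establish, independently of these lemmas, that $\pd\Hom_\D(\p,\y)_B$ and $\pd\Hom_\D(\p,\y[-1])_B$ are controlled for $\y$ in a three-fold star of $\P_\C^{[0,1]}$-shifts; that is not something the paper's toolkit supplies, and the proposal does not fill the hole.
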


\begin{proof}
Let $\x\in\CP$. Then by Lemma~\ref{lem:CPP}, we have that $\x\in\P\ast\P[1]\ast\P[2]\ast\P[3]$. By Lemma~\ref{lem:ind}, (taking $t=3$, $r=2$ and hence $\H =0$), there is an $\widetilde{\x}\in\CP$ such that $\widetilde{\x} \in\P\ast\P[1]\ast\P^{[0,1]}_\C[1]\ast\P^{[0,1]}_\C[2]$, and
\begin{equation}\label{n1}
\pd \Hom_\D(\p,\x)_B\leq\pd\Hom_\D(\p,\widetilde{\x})_B+2.
\end{equation}
Then there is a triangle
\[\z\rightarrow \y\rightarrow \widetilde{\x}\rightarrow \z[1]\]
with $\y\in\P\ast\P[1]$ and $\z\in\P^{[0,1]}_\C\ast\P^{[0,1]}_\C[1]$.
Applying the functor $\Hom_\D(\p,-)$ to this triangle, we have a long exact sequence
\[\cdots\rightarrow\Hom_\D(\p,\z[i])\rightarrow\Hom_\D(\p,\y[i])\rightarrow
\Hom_\D(\p,\widetilde{\x}[i])\rightarrow\Hom_\D(\p,\z[i+1])\rightarrow\cdots\]
Since $\Hom_\D(\p,\widetilde{\x}[i])=0$ for $i\neq0$ by $\widetilde{\x}\in\CP$, and $\Hom_\D(\p,\z[i])=0$ for $i\neq-1,0$ by $\z\in\C(\p)\ast\C(\p)[1]$, we have a short exact sequence
\[0\rightarrow\Hom_\D(\p,\z)\rightarrow\Hom_\D(\p,\y)\rightarrow\Hom_\D(\p,\widetilde{\x})\rightarrow0,\]
and
\[\Hom_\D(\p,\y[i])=0\text{ for $i<-1$.}\]
Then we have that
\begin{equation}\label{n2}
\pd\Hom_\D(\p,\widetilde{\x})_B\leq\max\{\pd\Hom_\D(\p,\y)_B,\pd\Hom_\D(\p,\z)_B+1\}
\end{equation}
and $\y, \z \in \mathcal{D}^{\geq -1}(\p)$.
By Lemma \ref{lemma}, there are objects $\widetilde{\y}$, $\widetilde{\z}\in\C(\p)$ such that:
$$\begin{tabular}{lll}
$\widetilde{\y}\in\P\ast\P[1]\ast\P^{[0,1]}_\C[2]$ & &
$\widetilde{\z}\in\P^{[0,1]}_\C\ast\P^{[0,1]}_\C[1]\ast\P^{[0,1]}_\C[2]$ \\
& & \\
$\Hom_\D(\p,\widetilde{\y}_B)\cong\Hom_\D(\p,\y)_B$   & &
$\Hom_\D(\p,\widetilde{\z})_B\cong\Hom_\D(\p,\z)_B$
\end{tabular}
$$
By Lemma~\ref{lem:ind}, (taking $t=1$, $r=2$ and $\H=\P^{[0,1]}_\C[2]$), there is an object $\widetilde{\y'}\in\CP$ such that $\widetilde{\y'}\in\P^{[0,1]}_\C\ast\P^{[0,1]}_\C[1]\ast\P^{[0,1]}_\C[2]$ and
\begin{equation}\label{n3}
\pd\Hom_\D(\p,\widetilde{\y})_B\leq\pd\Hom_\D(\p,\widetilde{\y'})_B
+2.
\end{equation}
By Lemma~\ref{lem:pd3}, we have $\pd\Hom_\D(\p,\widetilde{\z})_B\leq 3$ and $\pd\Hom_\D(\p,\widetilde{\y'})_B\leq 3$. Hence, combining
(\ref{n1}), (\ref{n2}) and (\ref{n3}), we obtain
\[\begin{array}{rcl}
\pd\Hom_\D(\p,\x)_B&\leq&\pd\Hom_\D(\p,\widetilde{\x})_B+2\\
&\leq&\max\{\pd\Hom_\D(\p,\y)_B,\pd\Hom_\D(\p,\z)_B +1\}+2\\
&=&\max\{\pd\Hom_\D(\p,\widetilde{\y})_B,
\pd\Hom_\D(\p,\widetilde{\z})_B+1\}+2\\
&\leq&\max\{\pd\Hom_\D(\p,\widetilde{\y'})_B+2,\pd\Hom_\D(\p,\widetilde{\z})_B+1\}+2\\
&\leq&7.
\end{array}\]
\end{proof}

\section{Examples}

\subsection{First example}

We first give an example to show that the bound in Theorem~\ref{thm:pd1} is possible.
Let $n\geq 2$ and $A=\k Q/I$ where $Q$ is the following quiver
\[\xymatrix@R=1pc{
2n+3\ar[r]^{a_{n}}&\cdots\ar[r]^{a_3}&7\ar[r]^{a_2}&5\ar[r]^{a_1}&3\ar[drr]^{c_1}\ar[dr]_{c_2}\\
&&&&&2\ar[dl]_{d_2}&1\ar[dll]^{d_1}\\
2n+4&\cdots\ar[l]_{b_{n}}&8\ar[l]_{b_3}&6\ar[l]_{b_2}&4\ar[l]_{b_1}
}\]
and the ideal $I$ is generated by $c_1d_1-c_2d_2$, $a_{i+1}a_i$ and $b_ib_{i+1}$, $1\leq i\leq n-1$. Then $\gld A=n$. Let $\p$ be the direct sum of the following complexes in $K^b(\proj A)$:
\[\begin{array}{cccl}
0&\longrightarrow&\bigoplus_{1\leq i\leq n+2,\ i\neq 2}P_{2i}&,\\
P_4&\longrightarrow&P_2&,\\
P_1&\longrightarrow&P_3&,\\
\bigoplus_{1\leq i\leq n+2,\ i\neq 2}P_{2i-1}&\longrightarrow&0&.
\end{array}\]
It is easily verified that $\p$ is a 2-term silting complex.
The quiver of the endomorphism ring $\End_\D(\p)$ is the Dynkin quiver of type $A_{2n+4}$:
\[\xymatrix@R=1pc{
2n+3\ar[r]^{a_{n}}&\cdots\ar[r]^{a_3}&7\ar[r]^{a_2}&5\ar[r]^{a_1}&3\ar[r]^{a_0}&1\ar[d]^c\\
2n+4&\cdots\ar[l]_{b_{n}}&8\ar[l]_{b_3}&6\ar[l]_{b_2}&4\ar[l]_{b_1}&2\ar[l]_{b_0}
}\]
with the relations $a_{i+1}a_i=0$, $b_ib_{i+1}=0$, $1\leq i\leq n-1$ and $a_0cb_0=0$. Hence the global dimension of $\End_\D(\p)$ is $2n+2$.

\subsection{Second example}

The next example shows that $7$ is a possible value for the global dimension of
the endomorphism algebra of a 2-term silting complex over an algebra of
global dimension two. Let $A=\k Q/I$ with $Q$ the following quiver
\[\xymatrix{
1\ar[r]^{a_1}&2\ar[r]^{a_2}&3\ar[r]^{a_3}&4\ar@<-.5ex>[r]_{a_5} \ar@<.5ex>[r]^{a_4}&5\ar[r]^{a_6}&6\ar[r]^{a_7}&7\ar[r]^{a_8}&8
}\]
and $I$ the ideal generated by $a_1a_2$, $a_3a_4a_6$ and $a_7a_8$.
Then $A$ has global dimension two, and the
 complex $\p$ given by the direct sum of the complexes
\[\begin{array}{cccl}
0&\longrightarrow&\bigoplus_{i=5,7,8}P_{i}&,\\
P_6&\longrightarrow&P_5&,\\
P_4&\longrightarrow&P_3&,\\
\bigoplus_{i=1,2,4}P_{i}&\longrightarrow&0&.
\end{array}\]
is a 2-term silting complex.
It is easily verified, that the quiver of $\End_\D(\p)$ is a linearly oriented Dynkin quiver of type $A_8$ and the ideal of relation equals the square of the Jacobson radical. Hence the global dimension of $\End_\D(\p)$ is 7.

\bigskip

\subsection{Third example}

The last example shows that there is no bound on the global dimension of the endomorphism algebra of a 2-silting object over an
algebra with global dimension $d\geq 3$. This example then completes the proof of Theorem \ref{Main1}.

Let first $A=\k Q/I$ where the quiver $Q$ is
\[
\xymatrix@C=2pc@R=2pc{
3\ar@<-.5ex>[r]_b&2\ar@<-.5ex>[l]_a\ar[d]^c\ar@<-.5ex>[r]_d&4\ar@<-.5ex>[l]_e\\
&1&
}\]
and $I=\langle ba, bd, abc, de\rangle$. The indecomposable projective $A-$modules are
\[P_1=\begin{smallmatrix}1\end{smallmatrix},\ \ \  P_2=\begin{smallmatrix}\ \ 2\ \ \\1\ 3\ 4\\\ \ 2\ \ \end{smallmatrix},
\ \ \  P_3=\begin{smallmatrix}3\\2\\1\end{smallmatrix},\ \ \  P_4=\begin{smallmatrix}\ \ 4\ \ \\\ \ 2\ \ \\1\ 3\ 4\\\ \ 2\ \ \end{smallmatrix}.\]
The integers here denote the corresponding simples, and the notation indicates the radical filtration. The global dimension of $A$ is 3. Let $\p$ be the direct sum of
\[
\p_i=\cdots \rightarrow 0 \rightarrow P_i \rightarrow  0 \rightarrow 0 \rightarrow \cdots,\ \ i=1,3,4,
\]
(concentrated  in degree -1) and
\[
\p_2=\cdots \rightarrow 0 \rightarrow P_1\oplus P_3\oplus P_4 \s{p}\rightarrow  P_2 \rightarrow 0 \rightarrow \cdots
\]
where $p$ is a projective presentation of $S_2$. Then it is easily verified that $\p$ is a 2-term silting complex.

By Proposition \ref{prop:summa}(b) we have that  $\T(\p) =\Fac H^0(\p)$,
and hence $\T(\p)=\add S_2.$ We will show the projective dimension of $S_2$ in $\CP$ is infinite, by proving that its third
syzygy equals $S_2$. This implies that a minimal projective resolution
of $S_2$ is periodic and hence infinite.

Using the notation in Section \ref{sec:pre}, we
have that $\widetilde{\p}_1 = \p_1$ and
$\widetilde{\p}_3 = \p_3$. Moreover $\widetilde{\p}_4 = (P_4/S_2)[1]$,
and $\widetilde{\p}_2$ is given by the complex
$$\cdots \rightarrow 0 \rightarrow P_1\oplus P_3\oplus (P_4/S_2) \s{\widetilde{p}}\rightarrow P_2 \rightarrow 0 \rightarrow \cdots$$
Consider now the triangle \[\cone({\pi})[-1]\rightarrow\widetilde{\p}_2\s{\pi}\rightarrow S_2\rightarrow\cone(\bf{\pi})\] where $\pi$ is
\[\begin{array}{ccccccccccc}
\cdots &\rightarrow&0&\rightarrow &P_1\oplus P_3\oplus (P_4/S_2)&\s{\widetilde{p}}\rightarrow&P_2&\rightarrow&0&\rightarrow&\cdots\\
&&\downarrow&&\downarrow&&\pi^0\downarrow&&\downarrow\\
\cdots &\rightarrow &0& \rightarrow &0& \rightarrow  &S_2& \rightarrow &0& \rightarrow &\cdots
\end{array}\]
with $\pi^0$ being a projective cover of $S_2$ in $\mod A$.

Then $H^0(\cone(\pi)[-1])= 0$, $H^{-1}(\cone(\pi)[-1])\cong H^{-1}(\widetilde{\p}_2)\in\F(\p)$ and $H^i(\cone(\pi)[-1])=0$ for $i\neq-1,0$. So $\cone(\pi)[-1]$ is in $\CP$, using
Proposition \ref{prop:summa} (d).
Hence $\pi$ is a projective cover of $S_2$ in $\CP$ and $\cone(\pi)[-1]$ is its kernel in $\CP$.

Note that $\cone(\pi)[-1]\cong H^{-1}(\cone(\pi)[-1])\cong H^{-1}(\widetilde{\p}_2)\cong
P_1 \oplus M$, where $M=\begin{smallmatrix}\ \ 2\ \ \\1\ 3\ 4\end{smallmatrix}\in\F(\p)$. Consider the triangle \[\cone(\pi_1)[-1]\rightarrow\widetilde{\p}_1\oplus\widetilde{\p}_3\oplus\widetilde{\p}_4\s{\pi_1}\rightarrow M [1]\rightarrow\cone(\pi_1)\] where $\pi_1$ is
\[\begin{array}{ccccccccccc}
\cdots &\rightarrow &0& \rightarrow &P_1\oplus P_3\oplus (P_4/S_2)& \rightarrow  &0& \rightarrow &0& \rightarrow &\cdots\\
&&\downarrow&&\pi_1^{-1}\downarrow&&\downarrow&&\downarrow\\
\cdots &\rightarrow &0& \rightarrow & M & \rightarrow  &0& \rightarrow &0& \rightarrow &\cdots
\end{array}\]
with $\pi_1^{-1}$ being the unique (up to a scalar) right minimal homomorphism from $P_1\oplus P_3\oplus (P_4/S_2)$ to $M$. Then $H^0(\cone(\pi_1)[-1])\cong S_2\in\T(\p)$, $H^{-1}(\cone(\pi_1)[-1])\cong \begin{smallmatrix}2\\1\end{smallmatrix}\oplus M \in\F(\p)$ (since $\Hom_A(S_2,\begin{smallmatrix}2\\1\end{smallmatrix})=0$) and $H^i(\cone(\pi_1)[-1])=0$ for $i\neq-1,0$. So $\cone(\pi_1)[-1]\in\CP$, hence $\pi_1$ is a projective cover of $M[1]$ in $\CP$ and $\cone(\pi_1)[-1]$ is its kernel.

Now consider the triangle \[\cone(\pi_2)[-1]\rightarrow \widetilde{\p}_2\s{\pi_2}\rightarrow\cone(\pi_1)[-1]\rightarrow\cone(\pi_2)\] where $\pi_2$ is
\[\begin{array}{ccccccccccc}
\cdots &\rightarrow &0& \rightarrow &P_1\oplus P_3\oplus (P_4/S_2)& \s{p}\rightarrow  &P_2& \rightarrow &0& \rightarrow &\cdots\\
&&\downarrow&&\pi_2^{-1}\downarrow&&\pi_2^{0}\downarrow&&\downarrow\\
\cdots &\rightarrow &0& \rightarrow &P_1\oplus P_3\oplus (P_4/S_2)& \s{\pi_1^{-1}}\rightarrow  & M & \rightarrow &0& \rightarrow &\cdots
\end{array}\]
with $\pi_2^{-1}$ being the identity map and $\pi^0_2$ being a projective cover of $M$ in $\mod A$. Then we have $H^0(\cone(\pi_2)[-1])\cong S_2\in\T(\p)$ and $H^i(\cone(\pi_1)[-1])=0$ for $i\neq 0$. Hence $\cone(\pi_2)[-1]\cong S_2\in\CP$ and we have a short exact sequence in $\CP$:
\[0\rightarrow S_2\rightarrow \widetilde{\p}_2\s{\pi_2}\rightarrow (\cone(\pi_1)[-1]\rightarrow0.\]
Thus, the projective resolution of $S_2$ in $\CP$ is periodic and hence the
projective dimension is infinite. Therefore, also the global dimension of $B$ is infinite,
by Proposition \ref{prop:summa} (e).

Now, for any $n$ consider the quiver $Q_n$ given by
\[
\xymatrix@C=2pc@R=2pc{
3\ar@<-.5ex>[r]_b & 2\ar@<-.5ex>[l]_a\ar[d]^{c_0}\ar@<-.5ex>[r]_d & 4\ar@<-.5ex>[l]_e &
& & \\
& 1_0 \ar[r]_{c_1} & 1_1  \ar[r]_{c_2} & 1_2  \ar[r]_{c_3} &
\cdots \ar[r]_{c_n} & 1_n
}\]

\noindent with relations $I_n=\langle ba, bd, abc_0, de, c_0 c_1, c_1 c_2, \dots, c_{n-1} c_n\rangle $. Consider the algebra $A(n) = kQ_n/I_n$.
We leave it as an exercise to check that $A(n)$ has global dimension
$n+3$, and to find a 2-term silting complex $\p'$,
such that $\End_{D^b(A(n))}(\p')$ has infinite global dimension.

\end{document}